\def\EquationsBySection{\def\theequation
	{\thesection.\arabic{equation}}%
	\@addtoreset{equation}{section}}
\newtheorem{example}{\noindent Example}
\newtheorem{theorem}{Theorem}[section]
\newtheorem{lemma}{Lemma}[section]
\newtheorem{remark}{Remark}[section]
\renewcommand{\vec}[1]{\boldsymbol{#1}}
\newcommand{\abs}[1]{\left\vert#1\right\vert}
\newcommand{\norm}[1]{\left\Vert#1\right\Vert}
\newcommand{\set}[1]{\left\{#1\right\}}
\renewcommand{\vec}[1]{\boldsymbol{#1}}
\begin{document}
	
	\title{Fourier-Galerkin method for scattering poles of sound soft obstacles
		\thanks{The research of Y. Ma was supported  by the NSFC under grant No.11901085. The research of J. Sun was partially supported by an NSF Grant DMS-2109949 and a SIMONS Foundation Collaboration Grant 711922.
	}}
	
	\author{
		Yunyun Ma\thanks{
			School of Computer Science and Technology, Dongguan University of Technology, Dongguan 523808, P. R. China.
			{\it mayy007@foxmail.com}}\quad 
		Jiguang Sun\thanks{Department of Mathematical Sciences, Michigan Technological University, Houghton, MI 49931, U.S.A. {\it jiguangs@mtu.edu} (Corresponding author)}
	}
	
	\maketitle
	
	\begin{abstract}
		The computation of scattering poles for a sound-soft obstacle is investigated. These poles correspond to the eigenvalues of two boundary integral operators. We construct novel decompositions of these operators and show that they are Fredholm. Then a Fourier-Galerkin method is proposed for discretization. By establishing the regular convergence of the discrete operators, an error estimate is established using the abstract approximation theory for eigenvalue problems of holomorphic Fredholm operator functions. We give details of the numerical implementation. Several examples are presented to validate the theory and demonstrate the effectiveness of the proposed method. 
	\end{abstract}
	
	\noindent{\it Keywords}:  scattering poles, boundary integral operators, Fourier-Galerkin method, holomorphic Fredholm operator functions
	
	\section{Introduction}
	Scattering resonances play a significant role in applications such as acoustics, electromagnetics, and quantum mechanics. For scattering by a sound-soft obstacle, they are the poles of the meromorphic continuation of the scattering operator and have been a classic subject in the scattering theory \cite{LaxPhillips1989, Taylor1996, Labreuche1998siamJAM, Ralston1972CPAM, SemyonMaciej}. In contrast to the extensive theoretical work by many researchers, computational methods for scattering poles have not yet garnered much attention from the numerical analysis community. This is partly because the problem is nonlinear and the partial differential equation is defined on the unbounded domain. Since scattering poles are complex with negative imaginary parts, and their associated eigenfunctions grow exponentially, the standard radiation condition for the scattered field at positive wavenumbers is no longer applicable. The correct condition at infinity is the outgoing condition, which coincides with the radiation condition when the wavenumber is positive. As a result, care must be taken when imposing artificial boundary conditions if the unbounded domain is truncated. Artificial boundary conditions that work well for the scattering problem when the wavenumbers are positive may perform poorly in this context. In contrast, boundary integral formulations satisfy the outgoing condition naturally.
	
	There exist two main groups of numerical methods for computing scattering poles. The first group comprises finite element methods, which typically require truncation of the unbounded domain. To address this, Dirichlet-to-Neumann (DtN) mapping and the Perfectly Matched Layer (PML) technique have been employed \cite{Lenoir1992, KimPasciak2009MC, NannenWess2018BIT, XiLinSun2024, XiGongSun2024}. While PML results in a linear eigenvalue problem, it introduces spurious eigenvalues that are difficult to identify. In contrast, DtN mapping does not contaminate the spectrum and is particularly advantageous with the development of highly efficient and robust contour integral-based methods for nonlinear matrix eigenvalue problems \cite{Beyn2012LAA, HuangEtal2016JCP, SunZhou2016, ChenSunXia2024, XiSun2024arXiv}. It is worth noting that Hardy space infinite elements have also been proposed \cite{HohageNannen2009}.
	
	The second group of methods relies on boundary integral operators, which inherently satisfy the outgoing wave condition. These methods require discretization only on the boundary of the obstacle, resulting in significantly smaller algebraic systems compared to finite element methods. A Galerkin boundary element method for Dirichlet Laplacian eigenvalues is analyzed in \cite{SteinbachUnger2012}. Additionally, a combined integral equation approach for the scattering poles of a sound-hard obstacle is proposed in \cite{OlafUnger2017MMAS}, while Nystr\"{o}m's method has been applied in \cite{MaSun2023AML} to achieve highly accurate results.

	In this paper, we consider the computation of scattering poles of a sound-soft obstacle. These poles are shown to be the eigenvalues of either the single-layer operator or the sum of the identity operator and the double-layer operator \cite{Taylor1996}.
		For the single-layer operator case,  we propose a Fourier-Galerkin method to approximate that operator and prove the regular convergence of the discrete operator \cite{GongSun2023ETNA}. An error estimate is then derived using the abstract approximation theory for eigenvalue problems of holomorphic Fredholm operator functions \cite{karma1996a, karma1996b}. 
		For the case of sum of the identity operator and the double-layer operator, we introduce an auxiliary operator that shares the same eigenvalues and is demonstrated to be Fredholm with index zero. The auxiliary operator is treated in a similar manner. 
	    Numerical examples are provided to validate the effectiveness of the proposed method. 
	
	The remainder of the paper is organized as follows. In Section 2, we define the scattering pole for a sound-soft obstacle and present two equivalent eigenvalue problems involving boundary integral operators. In Section 3, we propose a Fourier-Galerkin method for these eigenvalue problems. Specifically, we demonstrate that the boundary integral operators are holomorphic and Fredholm with index zero. We establish the convergence of eigenvalues by proving the regular convergence of the Fourier-Galerkin approximations and applying Karma's abstract approximation theory. Section 4 provides implementation details and several numerical examples for validation. Finally, Section 5 presents conclusions and discusses directions for future work.

	
	\section{Integral Equations for Scattering Poles}

	We introduce in this section  the boundary integral formulations for the scattering poles of a sound soft obstacle.  	Let $\Omega\subset\mathbb{R}^2$ be a bounded simply connected  domain with boundary $\Gamma:=\partial\Omega$. The scattering problem for a sound soft obstacle $\Omega$ is to find $u\in H^{(1)}_{loc}(\mathbb{R}^2\setminus\overline{\Omega})$ such that
	\begin{subequations}\label{pro:SP}
		\begin{align}
			\Delta u +\kappa^2 u=0 \quad  ~&\text{in}~\mathbb{R}^2\setminus\overline{\Omega}, \label{eq:TEP1}\\
			u=f \quad ~&\text{on}~\Gamma,\label{eq:TEP2}\\
			\lim_{r\to\infty}\sqrt{r}\left(\frac{\partial u}{\partial r}-{\rm i}\kappa u \right)&=0, \label{eq:TEP3}
		\end{align}
	\end{subequations}
	where $\kappa\in\mathbb{C}$ is the wave number, $f\in L^2(\Gamma)$, and $r=\abs{x}$. Equation \eqref{eq:TEP3} is Sommerfeld radiation condition. Let $\mathcal{B}(\kappa): L^2(\Gamma)\mapsto H^{(1)}_{loc}(\mathbb{R}^2\setminus\overline{\Omega})$ denote the solution operator of \eqref{pro:SP} such that  $u:=\mathcal{B}(\kappa)f$ satisfies \eqref{eq:TEP1}--\eqref{eq:TEP3}. 
	
	The operator $\mathcal{B}(\kappa)$ is well-defined and holomorphic on the upper half-plane of $\mathbb{C}$ and can be meromorphically continued to the whole complex plane \cite{Taylor1996}. The poles of $\mathcal{B}(\kappa)$ in $\set{\kappa:\rm{Im}(\kappa)<0}$ are called scattering resonances or poles depending on the context \cite{Labreuche1998siamJAM,Ralston1972CPAM}. At a scattering pole, there exists a non-zero scattered field $u$ in the absence of the incident field ($f=0$).  In contrast, if there are wave numbers for which there exists an incident field that does not scatter by the scattering object, these wave numbers are some interior eigenvalues associated with the support of the scatterer \cite{CCH2020}.


	We first reformulate the scattering problem \eqref{pro:SP} using boundary integrals. Let $\Phi$ be the Green’s function given by
	\begin{equation*}
		\Phi(x,y;\kappa):=\dfrac{{\rm i}}{4}{ H}^{(1)}_0(\kappa\abs{x-y}) \quad ~\text{for}~x,y\in\mathbb{R}^2,
	\end{equation*}
	where ${H}_0^{(1)}$ is the Hankel function of the first kind of order zero. The single and double layer potentials are defined as
	\begin{equation*}\label{Sec2:singleLayer_P}
		(\mathcal{SL}(\kappa)[\phi])(x):=2\int_{\Gamma}\Phi(x,y;\kappa)\phi(y){\rm d}s(y), \quad x\in\mathbb{R}^2\backslash\Gamma,
	\end{equation*}
	and 
	\begin{equation*}\label{Sec2:doubleLayer_P}
		(\mathcal{DL}(\kappa)[\phi])(x):=2\int_{\Gamma}\dfrac{\partial\Phi(x,y;\kappa)}{\partial \vec{\nu}(y)}\phi(y){\rm d}s(y), \quad x\in\mathbb{R}^2\backslash\Gamma, 
	\end{equation*}
	where $\nu$ is the unit outward normal to $\Gamma$ and $\phi$ is an integrable function. The interior and exterior Dirichlet traces on $\Gamma$ of the single and double layer potentials satisfy the jump conditions
	\begin{equation}\label{Sec2:signleLayer_Jump}
		(\mathcal{SL}(\kappa)[\phi])^{\pm}
		=\mathcal{S}(\kappa)[\phi]
	\end{equation}
	and
	\begin{equation}\label{Sec2:doubleLayer_DJump}
		(\mathcal{DL}(\kappa)[\phi])^{\pm}
		=\mathcal{D}(\kappa)[\phi]\pm\phi,
	\end{equation}
	where the single and double layer operators are defined by
	\begin{equation*}
		(\mathcal{S}(\kappa)[\phi])(x):=2\int_{\Gamma}\Phi(x,y;\kappa)\phi(y){\rm d}s(y), \quad x\in\Gamma,
	\end{equation*}
	and 
	\begin{equation*}
		(\mathcal{D}(\kappa)[\phi])(x):=2\int_{\Gamma}\dfrac{\partial\Phi(x,y;\kappa)}{\partial \vec{\nu}(y)}\phi(y){\rm d}s(y), \quad x\in\Gamma.
	\end{equation*}

	We now present two integral formulations for the scattering problem, which respectively use the single and double layer potentials. The first one seeks the solution of \eqref{pro:SP} in the form $u=\mathcal{SL}(\kappa)\phi$, where $\phi$ is the unknown density. According to \eqref{Sec2:signleLayer_Jump}, we obtain the integral equation
	\begin{equation}\label{Sec2:BIE-S}
		\dfrac{1}{2}\mathcal{S}(\kappa)\phi=f.
	\end{equation}
	If $\mathcal{S}(\kappa)$ is invertible, the scattering operator $\mathcal{B}(\kappa)$ has the form
	\begin{equation*}
		\mathcal{B}(\kappa):=2\mathcal{SL}(\kappa)\mathcal{S}(\kappa)^{-1}.
	\end{equation*}
	Note that if $\kappa\in\mathbb{R}$, $u=\mathcal{SL}(\kappa)\phi$, not a Dirichlet eigenvalue on $\Omega$, and $\phi$ satisfies \eqref{Sec2:BIE-S}, then $u=\mathcal{SL}(\kappa)\phi$ solves the interior Dirichlet problem of the Helmholtz equation in $\Omega$.

	The second integral formulation is to seek the solution  to \eqref{pro:SP}  in the form $u=\mathcal{DL}(\kappa)\phi$. This together with \eqref{Sec2:doubleLayer_DJump} yields the integral equation
	\begin{equation}\label{Sec2:BIE-D}
		\dfrac{1}{2}\left(\mathcal{I}+\mathcal{D}(\kappa)\right)\phi=f.
	\end{equation}
	If $\mathcal{I}+\mathcal{D}(\kappa)$ is invertible, the scattering operator $\mathcal{B}(\kappa)$ has the form
	\begin{equation*}
		\mathcal{B}(\kappa):=2\mathcal{DL}(\kappa)\left(\mathcal{I}+\mathcal{D}(\kappa)\right)^{-1}.
	\end{equation*}
	
	\begin{remark}
		The solution defined in the form of single layer potential or double layer potential satisfies the outgoing condition as the Green's function for all $\kappa \in {\mathbb C}$. The outgoing condition is equivalent to the Sommerfeld radiation condition when $\kappa$ is positive, which is no longer true for $\kappa$ with a negative imaginary part. As a consequence, \eqref{eq:TEP3} cannot be used for scattering poles.
	\end{remark}
	
	It is shown in \cite{Taylor1996} that scattering poles of $\mathcal{B}(\kappa)$ are the zeros of $\mathcal{S}(\kappa)$, or $\mathcal{I}+\mathcal{D}(\kappa)$ in  $\set{\kappa:\rm{Im}(\kappa)<0}$. Hence, in this paper, we shall compute the zeros of $\mathcal{S}(\kappa)$, or $\mathcal{I}+\mathcal{D}(\kappa)$ by proposing a Fourier Galerkin method. To this end, we assume that the boundary curve $\Gamma$ is given by a $2\pi$-periodic parametric representation of the form
	\begin{equation*}
		\Gamma:=\{z(t):=(z_1(t),z_2(t))^\top: t\in I:=[0,2\pi]\}.
	\end{equation*}  
	The parameterized operator $\mathcal{S}(\kappa)$ is
	\begin{equation}\label{Sec2:singleLayer_O}
		(\mathcal{S}(\kappa)[\phi])(s)=2\int_0^{2\pi}K^S(s,t;\kappa)\varphi(t)\,{\rm d}t,~\text{for}~s\in I,
	\end{equation}
	where $\varphi(t)=\phi(z(t))$ and
	\begin{equation*}
		K^S(s, t;\kappa)=\dfrac{\rm i}{4}H_0^{(1)}(\kappa|z(s)-z(t)|)|z'(t)|.
	\end{equation*}
	The parameterized operator $\mathcal{D}(\kappa)$ is given by
	\begin{equation}\label{Sec2:doubleLayer_O}
		(\mathcal{D}(\kappa)[\phi])(s)=2\int_0^{2\pi}K^D(s,t;\kappa)\varphi(t)\,{\rm d}t,~\text{for}~s\in I,
	\end{equation}
	where
	\begin{equation*}
		K^D(s, t;\kappa)=\dfrac{\rm i\kappa}{4}\dfrac{(z(s)-z(t))\cdot\vec{\nu}(z(t))}{|z(s)-z(t)|}H_1^{(1)}(\kappa|z(s)-z(t)|)|z'(t)|
	\end{equation*}
	with ${ H}_1^{(1)}$ being the Hankel function of the first kind of order one. To analyze the Fourier Galerkin method, we need the boundary operators for Laplacian. Let $\mathcal{S}(0)$ and $\mathcal{D}(0)$ denote the single layer operator and  double layer operator for the Laplace operator, which are defined the same as $\eqref{Sec2:singleLayer_O}$and $\eqref{Sec2:doubleLayer_O}$ by replacing $\Phi$ with 
	\begin{equation*}
		\Phi_0(x,y):=-\dfrac{1}{\pi}\ln{\abs{x-y}},~\text{for}~x,y\in\mathbb{R}^2,
	\end{equation*}
	the fundamental solution of the Laplace operator. The parametric form of $\mathcal{S}(0)$ and $\mathcal{D}(0)$ can be obtained by the same parametric representation of $\Gamma$ with $\mathcal{S}(\kappa)$ and $\mathcal{D}(\kappa)$. We present some properties of those operators and refer the readers to \cite{Anne2011,DominguezLyonTurc2016,Kirsch1989,Taylor1996}. 
	
	\begin{theorem}\label{thm:Sec2}
		Let $\kappa \in \mathbb C$ and $\Gamma$ be of $C^{p+3,1}$ with $p\geq 0$. 
		\begin{enumerate}
			\item[\rm (i)] The operator $\mathcal{D}(\kappa): H^{p}(I)\to H^{p}(I)$ is compact.
			\item[\rm(ii)] The operator $\mathcal{D}(\kappa_1)-\mathcal{D}(\kappa_2): H^{p}(I)\to H^{p+1}(I) $ is continuous and compact if $\kappa_1\neq\kappa_2$.
			\item[\rm(iii)] The operator $\mathcal{I}+\mathcal{D}(\kappa): H^{p}(I)\to H^{p}(I) $ is Fredholm of index zero. Moreover, if the imaginary part of $\kappa$ is positive, the operator $\mathcal{I}+\mathcal{D}(\kappa): H^{p}(I)\to H^{p}(I) $ is invertible.
			\item[\rm(iv)] The operator $\mathcal{S}(\kappa)-\mathcal{S}(0): H^{p}(I)\to H^{p+2}(I) $ is continuous and compact.
		\end{enumerate}
	\end{theorem}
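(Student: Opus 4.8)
The plan is to obtain the mapping properties (i), (ii) and (iv) by the classical kernel-splitting technique, and to deduce (iii) from part~(i) together with a potential-theoretic uniqueness argument; most of the needed facts are in \cite{Anne2011,Kirsch1989,Taylor1996}, so the work is to assemble them in the parametrized setting used here. First I would use the small-argument expansions of the Hankel functions,
\[
H_0^{(1)}(w)=1+\tfrac{2\mathrm{i}}{\pi}\bigl(\ln(w/2)+\gamma\bigr)J_0(w)+w^2P_0(w^2),\qquad H_1^{(1)}(w)=-\tfrac{2\mathrm{i}}{\pi w}+\tfrac{2\mathrm{i}}{\pi}\ln(w/2)\,J_1(w)+w\,P_1(w^2),
\]
with $\gamma$ Euler's constant and $P_0,P_1$ entire, substituting $w=\kappa\abs{z(s)-z(t)}$. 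Using that $\abs{z(s)-z(t)}^2$ and $(z(s)-z(t))\cdot\nu(z(t))$ are smooth in $(s,t)$ and vanish to order two on the diagonal $s=t$ (so their quotient with $\abs{z(s)-z(t)}^2$ extends with the smoothness of $\Gamma$ minus two derivatives), and that $\ln\abs{z(s)-z(t)}=\tfrac12\ln\!\bigl(4\sin^2\tfrac{s-t}{2}\bigr)+(\text{smooth})$, I would arrive at decompositions
\[
K^{S}(s,t;\kappa)=K_1^{S}(s,t;\kappa)\ln\!\Bigl(4\sin^2\tfrac{s-t}{2}\Bigr)+K_2^{S}(s,t;\kappa),\quad K^{D}(s,t;\kappa)=K_0^{D}(s,t)+K_1^{D}(s,t;\kappa)\ln\!\Bigl(4\sin^2\tfrac{s-t}{2}\Bigr)+K_2^{D}(s,t;\kappa),
\]
whose salient features are: $K_0^{D}(s,t)=\tfrac{1}{2\pi}\,(z(s)-z(t))\cdot\nu(z(t))\,\abs{z(s)-z(t)}^{-2}\,\abs{z'(t)}$ is the $\kappa$-\emph{independent} Laplace double-layer kernel and is already a continuous kernel; $K_1^{D}(\cdot,\cdot;\kappa)$ vanishes to order two on the diagonal (a factor coming from $J_1$ and from $(z(s)-z(t))\cdot\nu(z(t))$); $K_1^{S}(s,t;\kappa)=-\tfrac{1}{4\pi}J_0(\kappa\abs{z(s)-z(t)})\abs{z'(t)}$ has the same diagonal value as the logarithmic coefficient of $K^{S}(\cdot,\cdot;0)$; and $K_2^{S},K_2^{D}$ are remainders inheriting the smoothness of $\Gamma$.

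Next I would read off the Sobolev mapping properties from the standard facts that a smooth coefficient times $\ln(4\sin^2\tfrac{s-t}{2})$ acts boundedly $H^p(I)\to H^{p+1}(I)$, that a sufficiently smooth kernel gains the corresponding number of Sobolev orders, and that an additional order-$k$ vanishing of the coefficient on the diagonal adds a gain of $k$ orders. Then (i) follows since $K_0^{D}$, $K_1^{D}\ln(4\sin^2\tfrac{s-t}{2})$ and $K_2^{D}$ make $\mathcal D(\kappa)$ map $H^p(I)$ boundedly into $H^{p+1}(I)$, hence compact on $H^p(I)$ through $H^{p+1}(I)\hookrightarrow H^p(I)$. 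For (ii), the $\kappa$-independent $K_0^{D}$ cancels in $\mathcal D(\kappa_1)-\mathcal D(\kappa_2)$, leaving a coefficient with order-two diagonal vanishing times the log kernel plus a smoother remainder, which maps $H^p(I)\to H^{p+1}(I)$ continuously and, composed with the embedding, compactly. For (iv), $K_1^{S}(\cdot,\cdot;\kappa)-K_1^{S}(\cdot,\cdot;0)=-\tfrac{1}{4\pi}\bigl(J_0(\kappa\abs{z(s)-z(t)})-1\bigr)\abs{z'(t)}=O(\abs{s-t}^2)$, so after cancellation the logarithmic coefficient vanishes to order two, giving the gain of \emph{two} Sobolev orders claimed for $\mathcal S(\kappa)-\mathcal S(0)$, and hence its continuity and compactness.

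For (iii), part~(i) makes $\mathcal D(\kappa)$ compact on $H^p(I)$, so $\mathcal I+\mathcal D(\kappa)$ is Fredholm of index zero by the Riesz theory, and invertibility reduces to injectivity. I would take $(\mathcal I+\mathcal D(\kappa))\varphi=0$ — with $\varphi$ automatically smooth by bootstrapping the smoothing of $\mathcal D(\kappa)$ — set $\phi=\varphi\circ z^{-1}$ and $u=\mathcal{DL}(\kappa)\phi$. By \eqref{Sec2:doubleLayer_DJump} the exterior trace is $u^{+}=(\mathcal I+\mathcal D(\kappa))\phi=0$; since $\operatorname{Im}\kappa>0$, $\Phi(\cdot,\cdot;\kappa)$ and its derivatives decay exponentially, so $u$ and $\nabla u$ decay exponentially at infinity and $u$ solves $\Delta u+\kappa^2u=0$ in $\mathbb{R}^2\setminus\overline\Omega$. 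Green's first identity on $B_R\setminus\overline\Omega$ (with $B_R$ a large ball), letting $R\to\infty$ and using $u^{+}=0$, gives $\int_{\mathbb{R}^2\setminus\overline\Omega}\bigl(\abs{\nabla u}^2-\kappa^2\abs{u}^2\bigr)=0$, whence $u\equiv0$ outside $\overline\Omega$ — by taking the imaginary part when $\kappa^2\notin\mathbb{R}$ and the real part when $\kappa$ is purely imaginary (so $\kappa^2<0$). Then the continuity of the normal derivative of the double-layer potential gives $(\partial_\nu u)^{-}=(\partial_\nu u)^{+}=0$, while \eqref{Sec2:doubleLayer_DJump} gives $u^{-}=\mathcal D(\kappa)\phi-\phi=u^{+}-2\phi=-2\phi$; hence $u|_\Omega$ solves the homogeneous Neumann problem for $\Delta+\kappa^2$ in $\Omega$, and the same dichotomy forces $u|_\Omega\equiv0$, so $\phi=-\tfrac12u^{-}=0$. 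Thus $\mathcal I+\mathcal D(\kappa)$ is injective, hence invertible.

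The hard part will be the regularity bookkeeping of the second paragraph: under the finite smoothness $\Gamma\in C^{p+3,1}$ one must verify that $K_0^{D},K_1^{S},K_1^{D},K_2^{S},K_2^{D}$ actually retain enough derivatives and that the diagonal vanishing orders combine to yield \emph{exactly} the stated gains (compactness on $H^p$ for $\mathcal D(\kappa)$, gain one for the $\mathcal D$-difference, gain two for the $\mathcal S$-difference). A subtler but shorter point is the appeal to Green's identity at infinity in (iii): it is legitimate because the layer potentials satisfy the outgoing condition for every $\kappa$ — equivalently, they decay exponentially when $\operatorname{Im}\kappa>0$ — and not because of the Sommerfeld condition \eqref{eq:TEP3}.
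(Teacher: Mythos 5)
Your proposal is correct, but it takes a genuinely different --- and far more self-contained --- route than the paper's, which disposes of (i), (iv) and the building blocks of (ii) by citing \cite{Kirsch1989}, obtains the invertibility in (iii) from Proposition 7.1 of \cite{Taylor1996}, and proves (ii) simply by telescoping, $\mathcal{D}(\kappa_1)-\mathcal{D}(\kappa_2)=(\mathcal{D}(\kappa_1)-\mathcal{D}(0))-(\mathcal{D}(\kappa_2)-\mathcal{D}(0))$. Your kernel-splitting paragraph reconstructs exactly the content of the cited results of Kirsch (and your treatment of (ii), cancelling the $\kappa$-independent singular part $K_0^D$, is the same idea as the paper's telescoping, carried out at the level of kernels); what it buys is transparency about where the gains of one and two Sobolev orders come from, at the price of the ``regularity bookkeeping'' under the finite smoothness $C^{p+3,1}$ that you rightly flag as the hard part --- that bookkeeping is precisely what \cite{Kirsch1989} supplies, so a complete write-up must either carry it out or fall back on the citation, as the paper does. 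Your uniqueness argument for (iii) (Riesz theory plus the exterior/interior Green's-identity dichotomy, using the exponential decay of $\Phi(\cdot,\cdot;\kappa)$ for $\operatorname{Im}\kappa>0$ in place of \eqref{eq:TEP3}) is the classical potential-theoretic proof and is sound, including the bootstrap that makes the density regular enough for the continuity of the normal derivative of the double-layer potential across $\Gamma$. One small point in your favour: the cancellation you invoke in (iv) requires the Laplace kernel to be $-\frac{1}{2\pi}\ln\abs{x-y}$, matching the logarithmic singularity of $\frac{\mathrm{i}}{4}H_0^{(1)}$; the paper's $\Phi_0=-\frac{1}{\pi}\ln\abs{x-y}$ is off by a factor of two (evidently a normalization slip), and yours is the normalization for which $\mathcal{S}(\kappa)-\mathcal{S}(0)$ actually gains two orders.
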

\begin{proof}
	The properties (i) and (iv) can be found in \cite{Kirsch1989}. This together with Proposition 7.1 in \cite{Taylor1996} yields (iii). According to \cite{Kirsch1989}, We have that $\mathcal{D}(\kappa_j)-\mathcal{D}(0): H^{p}(I)\to H^{p+1}(I) $ is continuous and compact for $j=1$, $2$. The property (ii) is then obtained since $\mathcal{D}(\kappa_1)-\mathcal{D}(\kappa_2)=(\mathcal{D}(\kappa_1)-\mathcal{D}(0))-(\mathcal{D}(\kappa_2)-\mathcal{D}(0))$.
\end{proof}
	
	
	\section{Fourier Galerkin Method}
	We propose a Fourier-Galerkin method to compute the zeros of $\mathcal{S}(\kappa)$ or  $\mathcal{I}+\mathcal{D}(\kappa)$, which are the scattering poles of ${\mathcal B}(\kappa)$. 
	We prove the regular convergence of the discrete approximation operators $\mathcal{S}_n(\kappa)$ to $\mathcal{S}(\kappa)$.
   For  $\mathcal{I}+\mathcal{D}(\kappa)$, we  construct a new integral operator $\mathcal{H}(\kappa)$, which has the same zeros as $\mathcal{I}+\mathcal{D}(\kappa)$, and show that it is a holomorphic Fredholm operator function.  The convergence of the eigenvalues is obtained using the abstract approximation theory for the eigenvalue problem of a holomorphic Fredholm operator function. 
	
	We first recall Sobolev space $H^p(I)$ for $p\geq 0$ defined by requiring for their elements a certain decay of the Fourier coefficients.  Let
	\begin{equation}\label{Sec3:TriangularBasisFunctions}
		e_m(t):=\dfrac{1}{\sqrt{2\pi}}{\rm e}^{{\rm i}m t},~t\in I,
	\end{equation}
	for $m\in\mathbb{Z}:=\{\ldots,-1,0,1,\ldots\}$. For $p\geq0$, we denote by $H^p(I)$ the space of  all functions $\phi\in L^2(I)$ such that 
	\[
	\sum_{m\in\mathbb{Z}}(1+m^2)^p\abs{\phi_m}^2<\infty,
	\] 
	where $\phi_m:=\int_I\phi(t) \overline{e_m(t)}{\rm d}t, m\in\mathbb{Z},$ are the Fourier coefficients of $\phi$. The norm on $H^p(I)$ is given by $\norm{\cdot}_p:=\langle{\cdot,\cdot}\rangle_p^{1/2}$, where the inner product is defined by
	\begin{equation*}
		\langle{\phi,\psi}\rangle_p:=\sum_{m\in\mathbb{Z}}(1+m^2)^p\phi_m\overline{\psi_m},~\phi,\psi\in H^p(I).
	\end{equation*}
	Note that $H^0(I)$ coincides with $L^2(I)$.  For each $n\in\mathbb{N}$, we define a finite-dimensional subspace 
	\[
	\mathbb{T}_n:=\text{span}\{e_m:\abs{m}\in\mathbb{Z}_n\}, \quad \mathbb{Z}_n:=\set{0,1,\ldots,n}.
	\] Let $\mathcal{P}_n$ be the orthogonal projection from $L^2(I)$ to  $\mathbb{T}_n$. It is well known that there exists a positive constant $c$ such that for any $\phi\in H^r(I)$ with $r>0$ and any $m\in[0,r)$  \cite{kressBook1989}
	\begin{equation}\label{sec3:proj}
		\norm{\phi-\mathcal{P}_n\phi}_m\leq cn^{m-r}\norm{\phi}_r
	\end{equation}
	and
	\begin{equation}\label{sec3:proj0}
		\lim_{n\to\infty}\norm{\phi-\mathcal{P}_n\phi}_r=0.
	\end{equation}

	We now present some preliminaries on the abstract approximation theory for eigenvalue problems of holomorphic Fredholm operator functions \cite{karma1996a, karma1996b}. Let $\mathbb{X}$ and $\mathbb{Y}$ be complex Banach spaces. Let $\Theta\subset\mathbb{C}$ be a compact region. Assume that  $\mathcal{F}(\lambda):\mathbb{X}\mapsto\mathbb{Y}$ is a holomorphic operator function on $\Theta$ and for each $\kappa \in \Theta$, $\mathcal{F}(\kappa)$ is a Fredholm operator of index 0. The eigenvalue problem of $\mathcal{F}(\cdot)$ is to find $\lambda\in \Theta $ and $w\in\mathbb{X}$ with $w\neq0$ such that
	\begin{equation*}
		\mathcal{F}(\lambda)w=0.
	\end{equation*}
	The resolvent set  of $\mathcal{F}$ is defined as
	\begin{equation*}
		\rho(\mathcal{F}):=\{\lambda\in\Theta: \mathcal{F}(\lambda)^{-1}~ \text{exists and is bounded}\}.
	\end{equation*}
	Assume that $\rho(\mathcal{F})\neq\varnothing$. Then the spectrum $\sigma(\mathcal{F}):=\Theta\setminus \rho(\mathcal{F})$ of $\mathcal{F}$ has no cluster points in $\Theta$ and all the elements in $\sigma(\mathcal{F})$ are eigenvalues of $\mathcal{F}$. 
	
	To approximate the eigenvalues of $\mathcal{F}(\cdot)$, we need two sequences of discrete Banach spaces $\mathbb{X}_n$ and  $\mathbb{Y}_n$, and a sequences of discrete operator functions $\mathcal{F}_n(\cdot): \mathbb{X}_n \to \mathbb{Y}_n$ for $n\in\mathbb{N}$ such that the following approximation properties hold.
	\begin{enumerate}
		\item[\rm(A1)] { There exist linear bounded mappings $\mathcal{L}_n:\mathbb{X}\mapsto\mathbb{X}_n$  and $\mathcal{Q}_n:\mathbb{Y}\mapsto\mathbb{Y}_n$ such that
			\begin{equation*}
				\lim_{n\to\infty}\norm{\mathcal{L}_nw}_{\mathbb{X}_n}=\norm{w}_{\mathbb{X}},~w\in\mathbb{X},~\lim_{n\to\infty}\norm{\mathcal{Q}_nw}_{\mathbb{Y}_n}=\norm{w}_{\mathbb{Y}},~w\in\mathbb{Y}.
		\end{equation*} }
		\item[\rm(A2)] { $\{\mathcal{F}_n(\cdot)\}_{n\in\mathbb{N}}$ is equibounded on $\Theta$.}
		\item[\rm(A3)] { For each $\kappa\in\Theta$, $\{\mathcal{F}_n(\kappa)\}_{n\in\mathbb{N}}$ approximates $\mathcal{F}(\kappa)$, i.e., for $w\in\mathbb{X}$,
			\begin{equation*}
				\lim_{n\to\infty}\norm{\mathcal{F}_n(\kappa)\mathcal{L}_nw-\mathcal{Q}_n\mathcal{F}(\kappa)w}_{\mathbb{Y}_n}=0.
		\end{equation*}}
		\item[\rm(A4)] { For each $\kappa\in\Theta$, $\{\mathcal{F}_n(\kappa)\}_{n\in\mathbb{N}}$ is regular, i.e.,  if $\{\mathcal{F}_n(\kappa)x_n\}_{n\in\mathbb{N}}$ is compact for $\norm{x_n}_{\mathbb{X}_n}\leq1$ with $n\in\mathbb{N}$, then $\{x_n\}_{n\in\mathbb{N}}$ is compact.}
	\end{enumerate}
	
	Let $\lambda\in\sigma(\mathcal{F})$ and $\tau(\mathcal{F},\lambda)$ be the ascent of $\lambda$, i.e., the maximal length of all Jordan chains for $\lambda$. Denote by $\Lambda(\lambda)$ the closed linear hull of the generalized eigenfunctions associated to $\lambda$. Under the assumption of above properties, the following theorem states that all eigenvalues and eigenfunctions of $\mathcal{F}(\cdot)$ are approximated correctly by those of $\mathcal{F}_n(\cdot)$ (see, e.g., Theorem 2.10 in \cite{Beyn2014}).
	
	\begin{theorem}\label{sec3:thm0}
		For any $\lambda\in\sigma(\mathcal{F})$, there exists a positive integer $N\in\mathbb{N}$, a positive constant $C$ and a sequence $\lambda_n\in\sigma(\mathcal{F}_n)$ 
		for $n\geq N$, such that
		$\lambda_n\to\lambda$ as $n\to\infty$ and 
		\begin{equation*}
			\abs{\lambda_n-\lambda}\leq C\epsilon_n^{1/\tau},
		\end{equation*}
		\begin{equation*}
			\inf\limits_{v\in{\rm Ker}(\mathcal{F}(\lambda))}\{\norm{v_n^0-\mathcal{L}_nv}_{\mathbb{X}_n}\}\leq C \epsilon_n^{1/\tau},
		\end{equation*}
		where $v_n^0\in{\rm Ker}(\mathcal{F}_n(\lambda_n))$ with $\norm{v_n^0}_{\mathbb{X}_n}=1$, and 
		\begin{equation*}
			\epsilon_n=\max\limits_{\abs{\eta-\lambda}\leq\delta}\max\limits_{v\in\Lambda(\lambda),\norm{v}_{\mathbb{X}}=1}\{\norm{\mathcal{F}_n(\eta)\mathcal{L}_nv-\mathcal{Q}_n\mathcal{F}(\eta)v}_{\mathbb{Y}_n}\}.
		\end{equation*}	
	\end{theorem}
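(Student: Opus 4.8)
The plan is to derive Theorem~\ref{sec3:thm0} from Karma's abstract discrete‑approximation theory rather than re‑proving it ab initio, organizing the argument into three blocks: stable invertibility of the $\mathcal{F}_n$ off the spectrum, a generalized argument‑principle count that produces the eigenvalues $\lambda_n$ and forces $\lambda_n\to\lambda$, and the quantitative estimates in terms of $\epsilon_n$ and $\tau$. As a preliminary one uses that, since $\mathcal{F}(\cdot)$ is holomorphic and Fredholm of index zero on $\Theta$ with $\rho(\mathcal{F})\neq\varnothing$, the analytic Fredholm theory (already invoked above) makes $\sigma(\mathcal{F})$ discrete in $\Theta$ with finite algebraic multiplicities; hence for the fixed $\lambda\in\sigma(\mathcal{F})$ one may choose $\delta>0$ with $\overline{B_\delta(\lambda)}\subset\Theta$ and $\overline{B_\delta(\lambda)}\cap\sigma(\mathcal{F})=\{\lambda\}$.

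First I would show that for every $\eta\in\rho(\mathcal{F})$ the discrete operators $\mathcal{F}_n(\eta)$ are eventually invertible, with inverses uniformly bounded on compact subsets of $\rho(\mathcal{F})$. Injectivity is a contradiction argument built on (A1)--(A4): if $x_n\in\mathbb{X}_n$ with $\norm{x_n}_{\mathbb{X}_n}=1$ and $\mathcal{F}_n(\eta)x_n\to0$, then $\{\mathcal{F}_n(\eta)x_n\}$ is compact, so (A4) makes $\{x_n\}$ compact and, along a subsequence, discretely convergent to some $x\in\mathbb{X}$ with $\norm{x}_{\mathbb{X}}=1$ by (A1); (A2)--(A3) let one pass to the limit and obtain $\mathcal{F}(\eta)x=0$, contradicting $\eta\in\rho(\mathcal{F})$. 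Together with injectivity, the index‑zero structure inherited by $\mathcal{F}_n$ (immediate in our setting, where $\mathbb{X}_n=\mathbb{Y}_n=\mathbb{T}_n$ is finite dimensional) gives invertibility, and a second contradiction‑plus‑compactness argument, this time in the variable $\eta$ and using the equiboundedness (A2) together with holomorphy, upgrades this to a uniform bound $\norm{\mathcal{F}_n(\eta)^{-1}}\le M$ for all $\eta\in\partial B_\delta(\lambda)$ and all $n$ beyond some $N_0$.

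Second, on $\partial B_\delta(\lambda)$ the $\mathcal{F}_n(\eta)^{-1}$ are now available, and I would run a Keldysh--Gohberg--Sigal local reduction: composition of $\mathcal{F}(\eta)$ with invertible holomorphic operator factors reduces $\mathcal{F}(\eta)w=0$ near $\lambda$ to a finite matrix equation $M(\eta)\xi=0$, with $\det M$ vanishing at $\lambda$ to the order $m=\dim\Lambda(\lambda)$ and largest partial multiplicity equal to the ascent $\tau$. Applying the same factors to $\mathcal{F}_n$ produces $M_n(\eta)$; crucially, the reduction only tests $\mathcal{F}_n$ against (generalized) eigenfunctions associated to $\lambda$, so the induced perturbation is controlled by the quantity in the statement, $\sup_{\overline{B_\delta(\lambda)}}\norm{M_n(\eta)-M(\eta)}\le C\epsilon_n$. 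Rouch\'e's theorem for $\det M_n$ versus $\det M$ on $\partial B_\delta(\lambda)$ then shows that for $n$ large $\det M_n$ has exactly $m$ zeros (with multiplicity) in $B_\delta(\lambda)$, so $\sigma(\mathcal{F}_n)\cap B_\delta(\lambda)\neq\varnothing$; since $\delta$ was arbitrary, one extracts $\lambda_n\in\sigma(\mathcal{F}_n)$ with $\lambda_n\to\lambda$. For the rate, writing $\det M(\eta)=(\eta-\lambda)^m g(\eta)$ with $g(\lambda)\neq0$ and noting that the Jordan structure makes the order‑$\tau$ term in $(\eta-\lambda)$ the first one capable of splitting the root, a Newton‑polygon (Puiseux‑series) balance $|\eta-\lambda|^\tau\sim\epsilon_n$ yields $|\lambda_n-\lambda|\le C\epsilon_n^{1/\tau}$; a chain of length $\tau_i<\tau$ would contribute roots only within $O(\epsilon_n^{1/\tau_i})$, so the slowest rate is governed by the longest chain. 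The eigenfunction bound follows by pushing a unit null vector $v_n^0\in{\rm Ker}(\mathcal{F}_n(\lambda_n))$ through the bounded, boundedly invertible reduction to a null vector $\xi_n$ of $M_n(\lambda_n)$; since $\norm{M_n(\lambda_n)-M(\lambda)}\le C(\epsilon_n+|\lambda_n-\lambda|)\le C\epsilon_n^{1/\tau}$, the vector $\xi_n$ lies within $C\epsilon_n^{1/\tau}$ of ${\rm Ker}(M(\lambda))$, and lifting back with (A1) produces $v\in{\rm Ker}(\mathcal{F}(\lambda))$ with $\norm{v_n^0-\mathcal{L}_n v}_{\mathbb{X}_n}\le C\epsilon_n^{1/\tau}$.

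The hard part will be the rate in the third block. One has to set up the finite‑dimensional reduction so that it genuinely preserves the partial‑multiplicity (ascent $\tau$) structure of $\lambda$, so that the induced symbol perturbation is $O(\epsilon_n)$ with $\epsilon_n$ measured only on the generalized eigenspace $\Lambda(\lambda)$ and not in the full operator norm (which would be too crude to give the stated exponent), and so that the Puiseux/Newton‑polygon splitting of the roots of $\det M_n$ near $\lambda$ is applied correctly. Everything else is essentially bookkeeping, but it must be threaded consistently through the discrete‑convergence machinery, i.e.\ through the connecting maps $\mathcal{L}_n$ and $\mathcal{Q}_n$, at every limiting step.
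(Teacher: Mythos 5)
The paper does not actually prove Theorem~\ref{sec3:thm0}: it is quoted verbatim from the literature (Theorem~2.10 of Beyn, Latushkin and Rottmann-Matthes, which in turn packages Karma's two 1996 papers), so there is no in-paper argument to compare against. Measured against the proofs in those references, your outline follows the standard route --- stability of $\mathcal{F}_n(\eta)^{-1}$ off the spectrum via the regularity assumption (A4), a Gohberg--Sigal/Keldysh local equivalence reducing everything to a holomorphic matrix function $M(\eta)$ whose determinant vanishes at $\lambda$ with the correct partial multiplicities, Rouch\'e to produce and locate the $\lambda_n$, and a Newton-polygon balance giving the exponent $1/\tau$ --- and the first block in particular is essentially the argument Karma and Vainikko use.

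Two points keep this from being a proof rather than a plan. First, "applying the same factors to $\mathcal{F}_n$" is not literally possible: the equivalence factors for $\mathcal{F}$ act on $\mathbb{X}$ and $\mathbb{Y}$, while $\mathcal{F}_n$ maps $\mathbb{X}_n\to\mathbb{Y}_n$, so the discrete reduction $M_n(\eta)$ has to be built through the connecting maps $\mathcal{L}_n$, $\mathcal{Q}_n$, and the decisive estimate $\sup_{\overline{B_\delta(\lambda)}}\norm{M_n(\eta)-M(\eta)}\leq C\epsilon_n$ --- with $\epsilon_n$ measured only on $\Lambda(\lambda)$ rather than in operator norm --- is precisely the content of Karma's second paper. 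You correctly identify this as the hard part, but you assert it rather than supply it; without it neither the rate $\epsilon_n^{1/\tau}$ nor the eigenvector bound follows. Second, your justification that $\mathcal{F}_n(\eta)$ has index zero "because $\mathbb{X}_n=\mathbb{Y}_n=\mathbb{T}_n$ is finite dimensional" imports the specific application into the abstract theorem; in the general setting of the statement the spaces $\mathbb{X}_n,\mathbb{Y}_n$ are arbitrary Banach spaces, and Fredholmness of index zero for large $n$ must instead be deduced from the regular convergence (A4) together with the Fredholmness of $\mathcal{F}(\eta)$. Neither issue is a wrong turn, but both are exactly where the substance of the cited proofs lives.
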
	
	
	In the rest of this section, we shall employ the above theorem to prove that the convergence eigenvalues of the discrete operators converge. 
	
	We first consider the eigenvalue problem for $\mathcal{S}(\kappa)$.
	Write  $\mathcal{S}(\kappa)$ as
	\begin{equation*}\label{eq:1thm1}
		\mathcal{S}(\kappa):=\mathcal{K}+[\mathcal{S}(\kappa)-\mathcal{K}],
	\end{equation*}
	where the operator $\mathcal{K}$ is defined by
	\begin{equation*}\label{operator_K}
		\mathcal{K}[\varphi](s):=-\dfrac{1}{2\pi}\int_0^{2\pi}
		\left[\ln{\left(4\sin^2\dfrac{s-t}{2}\right)}-1\right]\varphi(t){\rm d}t.
	\end{equation*}
	The discrete operators $\mathcal{S}_n(\kappa)$ that approximate $\mathcal{S}(\kappa)$ are defined by
	\begin{equation}\label{SnKappa}
		\mathcal{S}_n(\kappa):=\mathcal{P}_n\mathcal{S}(\kappa)\mathcal{P}_n.
	\end{equation} 
	Let $\mathcal{K}_n:=\mathcal{P}_n\mathcal{K}\mathcal{P}_n$ and $\mathcal{G}_n(\kappa):=\mathcal{P}_n\mathcal{G}(\kappa)\mathcal{P}_n$ with $\mathcal{G}(\kappa):=\mathcal{S}(\kappa)-\mathcal{K}$. We present some properties of the operators $\mathcal{K}$, $\mathcal{K}_n$,  $\mathcal{G}(\kappa)$ and $\mathcal{G}_n(\kappa)$ in the following two lemmas.
	
	\begin{lemma}\label{sec3:lem1}
		Let  $p\geq0$.
				\begin{enumerate}
			\item[\rm (i)]For any $\phi\in H^{p}(I)$, $\norm{\phi}_{p}\leq\norm{\mathcal{K}\phi}_{p+1}\leq 2\norm{\phi}_{p}$. 
			\item[\rm(ii)] $\norm{\mathcal{P}_n\phi}_{p}\leq\norm{\mathcal{K}_n\phi}_{p+1}\leq 2\norm{\mathcal{P}_n\phi}_{p}$ for each $\phi\in H^{p}(I)$.
			\item[\rm(iii)] For any $\phi\in H^{p}(I)$,
			$$\lim_{n\to\infty}\norm{(\mathcal{K}_n-\mathcal{K})\phi}_{p+1}=0.$$
			\item[\rm(iv)] If $\Gamma$ is of $C^{p+3,1}$,
			the operator $\mathcal{S}(0)-\mathcal{K}$ is bounded from $H^{p}(I)$ to $H^{p+2}(I)$.
		\end{enumerate}
	\end{lemma}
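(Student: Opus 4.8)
The plan is to obtain (i) by diagonalising $\mathcal{K}$ in the Fourier basis, to deduce (ii) and (iii) from (i) together with the fact that $\mathcal{K}$ commutes with $\mathcal{P}_n$, and to derive (iv) from the classical splitting of the logarithmic single‑layer kernel. For (i), I would first compute the action of $\mathcal{K}$ on the basis functions $e_m$. Using the expansion $\ln\bigl(4\sin^2\tfrac{\theta}{2}\bigr)=-\sum_{k\ne0}\abs{k}^{-1}\rme^{\rmi k\theta}$ together with $\int_0^{2\pi}\rme^{\rmi m t}\,\rmd t=2\pi\delta_{m0}$, the constant ``$-1$'' in the definition of $\mathcal{K}$ contributes only to the zeroth Fourier mode, and one finds $\mathcal{K}e_0=e_0$ and $\mathcal{K}e_m=\abs{m}^{-1}e_m$ for $m\ne0$; i.e. $\mathcal{K}$ is diagonal in $\set{e_m}$ with eigenvalues $\mu_0=1$, $\mu_m=\abs{m}^{-1}$ ($m\ne0$). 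For $\phi=\sum_m\phi_m e_m$ this gives $\norm{\mathcal{K}\phi}_{p+1}^2=\sum_m(1+m^2)^{p+1}\mu_m^2\abs{\phi_m}^2$, and since $(1+m^2)\mu_m^2=1$ for $m=0$ while $(1+m^2)\mu_m^2=1+m^{-2}\in(1,2]$ for $m\ne0$ (with maximum $2$ at $m=\pm1$), a term‑by‑term comparison with $\norm{\phi}_p^2=\sum_m(1+m^2)^p\abs{\phi_m}^2$ yields $\norm{\phi}_p^2\le\norm{\mathcal{K}\phi}_{p+1}^2\le2\norm{\phi}_p^2$, hence $\norm{\phi}_p\le\norm{\mathcal{K}\phi}_{p+1}\le\sqrt2\,\norm{\phi}_p$, which is even stronger than the claimed bound.

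Items (ii) and (iii) then follow quickly. Since $\mathcal{K}$ is diagonal, it leaves $\mathbb{T}_n$ invariant and commutes with the orthogonal projection $\mathcal{P}_n$, so $\mathcal{K}_n=\mathcal{P}_n\mathcal{K}\mathcal{P}_n=\mathcal{K}\mathcal{P}_n=\mathcal{P}_n\mathcal{K}$. Thus $\mathcal{K}_n\phi=\mathcal{K}(\mathcal{P}_n\phi)$, and (ii) is exactly (i) applied to $\mathcal{P}_n\phi\in H^p(I)$. For (iii), I would write $(\mathcal{K}_n-\mathcal{K})\phi=(\mathcal{P}_n-\mathcal{I})\mathcal{K}\phi$; since $\mathcal{K}\phi\in H^{p+1}(I)$ by (i), the convergence \eqref{sec3:proj0} with $r=p+1$ gives $\norm{(\mathcal{K}_n-\mathcal{K})\phi}_{p+1}=\norm{(\mathcal{P}_n-\mathcal{I})\mathcal{K}\phi}_{p+1}\to0$.

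For (iv)---the substantive part---I would use the standard splitting of the weakly singular kernel. In parametric form the kernel of $\mathcal{S}(0)$ is, up to a fixed constant, $\ln\abs{z(s)-z(t)}\,\abs{z'(t)}$. Writing $\ln\abs{z(s)-z(t)}=\tfrac12\ln\bigl(4\sin^2\tfrac{s-t}{2}\bigr)+\tfrac12\ln\Psi(s,t)$ with $\Psi(s,t):=\abs{z(s)-z(t)}^2\big/\bigl(4\sin^2\tfrac{s-t}{2}\bigr)$ and $\Psi(s,s):=\abs{z'(s)}^2>0$, and using $z(s)-z(t)=(s-t)\int_0^1 z'(t+\sigma(s-t))\,\rmd\sigma$ and $4\sin^2\tfrac{s-t}{2}=(s-t)^2 h(s-t)$ with $h$ real‑analytic and positive, one checks that $\Psi$ is strictly positive and, because $\Gamma\in C^{p+3,1}$, that $\ln\Psi\in C^{p+1,1}(I\times I)$. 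Hence the leading $\ln\bigl(4\sin^2\tfrac{s-t}{2}\bigr)$‑singularity of $\mathcal{S}(0)$ coincides with that of $\mathcal{K}$, and $\mathcal{S}(0)-\mathcal{K}$ is left as an integral operator whose kernel is (up to an additive constant) of class $C^{p+1,1}$; for such an operator the image of any $\phi\in L^2(I)$ lies in $C^{p+1,1}(I)\subset H^{p+2}(I)$ with $H^{p+2}$‑norm controlled by $\norm{\phi}_{L^2}$, which gives the claimed boundedness $H^p(I)\to H^{p+2}(I)$ (the classical mapping estimate for the Laplace single‑layer operator used in \cite{Kirsch1989,kressBook1989}). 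I expect (iv) to be the main obstacle: one must pin down the precise regularity of the remainder $\ln\Psi$ inherited from $\Gamma\in C^{p+3,1}$ so that exactly two orders of Sobolev smoothness are gained, and then invoke the quantitative mapping estimate for integral operators with kernels of that regularity. Items (i)--(iii) are essentially bookkeeping once the Fourier‑multiplier description of $\mathcal{K}$ is in hand.
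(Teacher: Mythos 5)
Parts (i)--(iii) of your argument are correct and essentially identical to the paper's: the paper also rests on the diagonalization $\mathcal{K}e_0=e_0$, $\mathcal{K}e_m=\abs{m}^{-1}e_m$ (quoted from Kirsch's (3.65a)--(3.65b) rather than rederived from the Fourier expansion of the logarithm, as you do) and on the elementary inequality $(1+m^2)^p\le(1+m^2)^{p+1}/m^2\le2(1+m^2)^p$; your observation that the upper constant can be taken as $\sqrt2$ is a harmless sharpening. Your (iii) uses $(\mathcal{K}_n-\mathcal{K})\phi=(\mathcal{P}_n-\mathcal{I})\mathcal{K}\phi$ while the paper uses $\mathcal{K}(\mathcal{P}_n-\mathcal{I})\phi$; since $\mathcal{K}$ commutes with $\mathcal{P}_n$ these are the same identity, and both conclude via \eqref{sec3:proj0}.

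For (iv) the paper simply cites Theorem A.45 of Kirsch's book, whereas you attempt the kernel-splitting proof, and here there is a genuine gap at the crucial step: you assert that ``the leading $\ln\bigl(4\sin^2\frac{s-t}{2}\bigr)$-singularity of $\mathcal{S}(0)$ coincides with that of $\mathcal{K}$,'' but this requires the coefficients to match exactly, and with the paper's definitions they do not. Writing $\ln\abs{z(s)-z(t)}=\tfrac12\ln\bigl(4\sin^2\frac{s-t}{2}\bigr)+\tfrac12\ln\Psi(s,t)$, the parametrized kernel of $\mathcal{S}(0)$ (which carries the factor $2$ and the Jacobian $\abs{z'(t)}$) has logarithmic coefficient $-\tfrac1\pi\abs{z'(t)}$, while $\mathcal{K}$ has the constant coefficient $-\tfrac1{2\pi}$; the difference therefore retains a kernel $c(t)\ln\bigl(4\sin^2\frac{s-t}{2}\bigr)$ with $c(t)=-\tfrac1\pi\abs{z'(t)}+\tfrac1{2\pi}$, which vanishes only for very special parametrizations. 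An integral operator with such a kernel and $c$ not identically zero on the diagonal gains only \emph{one} Sobolev order (its symbol decays like $\abs{m}^{-1}$), so your argument as written cannot deliver the $H^p(I)\to H^{p+2}(I)$ bound. To close the gap you must either verify that the normalizations are consistent (i.e.\ track the constants in $\Phi_0$, the factor $2$, and $\abs{z'}$ so that the singular parts truly cancel, which is what the cited Theorem A.45 presupposes in its own normalization) or replace $\mathcal{K}$ by a suitably weighted version before subtracting. The remaining steps of your (iv) --- positivity and $C^{p+1,1}$ regularity of $\ln\Psi$ for $\Gamma\in C^{p+3,1}$, and the mapping of a $C^{p+1,1}$-kernel operator into $H^{p+2}(I)$ --- are sound.
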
	
	\begin{proof}
    The first conclusion {\rm (i)} can be found in the proof of  Theorem 3.18 in \cite{Kirsch1996}. 	Note that	
	the eigenvalues of $\mathcal{K}$ are $\set{1,\frac{1}{\abs{n}};n=\pm1,\pm2,\cdots}$ with
	\begin{equation*}
		\mathcal{K}e_0(t)=e_0(t),~\mathcal{K}e_n(t)=\frac{1}{\abs{n}}e_n(t),~\text{for}~n\in\mathbb{Z}\setminus\{0\},
	\end{equation*}
	which were shown in (3.65a) and (3.65b) in \cite{Kirsch1996}. Let $\phi\in H^{p}(I)$ and write
	$\phi=\sum_{n\in\mathbb{Z}}a_ne_n$.
    We have that 
    \begin{equation*}
	\mathcal{K}_n\phi= a_0e_0+\sum_{n\in\mathbb{Z}_n\setminus\{0\}}\dfrac{a_n}{\abs{n}}e_n.
    \end{equation*}
    This together with 
    \begin{equation*}
    (1+n^2)^p\leq\dfrac{(1+n^2)^{p+1}}{n^2}\leq 2(1+n^2)^p
    \end{equation*}
   yields {\rm (ii)}.
   Applying $\norm{(\mathcal{K}_n-\mathcal{K})\phi}_{p+1}\leq 2\norm{(\mathcal{I}-\mathcal{P}_n)\phi}_{p}$ with \eqref{sec3:proj0}, we obtain {\rm (iii)}. The proof of {\rm (iv)} is given by Theorem A.45 in \cite{Kirsch1996}.
   \end{proof}

	\begin{lemma}\label{sec3:lem2}
		Let $\Gamma$ be of $C^{p+3,1}$ for $p\geq1$.  
    $\mathcal{G}(\kappa):H^p(I)\mapsto H^{p+2}(I)$ is bounded, and $\mathcal{G}(\kappa):H^p(I)\mapsto H^{p+1}(I)$ is compact. Moreover,
	there exist an integer $n_0\in\mathbb{N}$ and a positive constant $c$ such that, for all $n\in\mathbb{N}$ with $n>n_0$ and $\phi\in H^{p}(I)$,
	\begin{equation}\label{convergenceOrderGnE} 
		\norm{[\mathcal{G}_n(\kappa)-\mathcal{G}(\kappa)]\phi}_{2}\leq c n^{-p}\norm{\phi}_{p}.
	\end{equation}
	\end{lemma}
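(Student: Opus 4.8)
The plan is to read off the mapping properties of $\mathcal{G}(\kappa)=\mathcal{S}(\kappa)-\mathcal{K}$ from the two splittings already prepared in the excerpt, and then to establish \eqref{convergenceOrderGnE} by a standard projection-error decomposition of $\mathcal{P}_n\mathcal{G}(\kappa)\mathcal{P}_n-\mathcal{G}(\kappa)$. For the mapping properties, I would write $\mathcal{G}(\kappa)=[\mathcal{S}(\kappa)-\mathcal{S}(0)]+[\mathcal{S}(0)-\mathcal{K}]$. By Theorem \ref{thm:Sec2}(iv) the first summand is bounded (indeed compact) from $H^{p}(I)$ into $H^{p+2}(I)$, and by Lemma \ref{sec3:lem1}(iv), since $\Gamma\in C^{p+3,1}$, the second summand is bounded from $H^{p}(I)$ into $H^{p+2}(I)$; hence $\mathcal{G}(\kappa):H^{p}(I)\to H^{p+2}(I)$ is bounded. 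Composing with the embedding $H^{p+2}(I)\hookrightarrow H^{p+1}(I)$, which is compact on the torus by the Fourier-coefficient definition of the norms, shows that $\mathcal{G}(\kappa):H^{p}(I)\to H^{p+1}(I)$ is compact. Since $p\geq1$ forces $\Gamma\in C^{3,1}$, the same two results applied with $p$ replaced by $0$ also give that $\mathcal{G}(\kappa):H^{0}(I)\to H^{2}(I)$ is bounded, say with constant $c_{0}$; this low-order smoothing is what makes the estimate below work.

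For the estimate I would split
\[ \mathcal{G}_n(\kappa)-\mathcal{G}(\kappa)=\mathcal{P}_n\,\mathcal{G}(\kappa)\,(\mathcal{P}_n-\mathcal{I})+(\mathcal{P}_n-\mathcal{I})\,\mathcal{G}(\kappa), \]
and bound the two terms in the $H^{2}(I)$ norm separately. For the second term, since $\mathcal{G}(\kappa)\phi\in H^{p+2}(I)$ with $\norm{\mathcal{G}(\kappa)\phi}_{p+2}\leq c\norm{\phi}_{p}$, the projection estimate \eqref{sec3:proj} with $m=2$ and $r=p+2$ (admissible because $p\geq1>0$) gives $\norm{(\mathcal{P}_n-\mathcal{I})\mathcal{G}(\kappa)\phi}_{2}\leq c\,n^{-p}\norm{\phi}_{p}$. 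For the first term, one has $\norm{\mathcal{P}_n\psi}_{s}\leq\norm{\psi}_{s}$ for every $s\geq0$ (immediate from the Fourier characterization of $\norm{\cdot}_s$), hence $\norm{\mathcal{P}_n\mathcal{G}(\kappa)(\mathcal{P}_n-\mathcal{I})\phi}_{2}\leq\norm{\mathcal{G}(\kappa)(\mathcal{P}_n-\mathcal{I})\phi}_{2}\leq c_{0}\norm{(\mathcal{P}_n-\mathcal{I})\phi}_{0}$, and \eqref{sec3:proj} with $m=0$, $r=p$ bounds the last factor by $c\,n^{-p}\norm{\phi}_{p}$. Adding the two bounds yields \eqref{convergenceOrderGnE}; the threshold $n_0$ serves only to absorb the behaviour of the constant in \eqref{sec3:proj} and of $n^{-p}$ for small $n$.

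The step I expect to be the real obstacle is the first term $\mathcal{P}_n\mathcal{G}(\kappa)(\mathcal{P}_n-\mathcal{I})\phi$. One cannot simply invoke the $H^{p}\to H^{p+2}$ bound of $\mathcal{G}(\kappa)$ there, since that would leave only $\norm{(\mathcal{P}_n-\mathcal{I})\phi}_{p}$, which carries no decay in $n$. The point is to spend the two derivatives that $\mathcal{G}(\kappa)$ gains at the \emph{lowest} level, $H^{0}\to H^{2}$, and to charge the $n^{-p}$ decay entirely to the spectral tail $(\mathcal{P}_n-\mathcal{I})\phi$ measured in $L^{2}$. Once this decomposition is chosen, the remaining arguments are routine applications of Theorem \ref{thm:Sec2}(iv), Lemma \ref{sec3:lem1}(iv), and \eqref{sec3:proj}.
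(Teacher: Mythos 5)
Your proof is correct and follows essentially the same route as the paper: the same splitting $\mathcal{G}(\kappa)=[\mathcal{S}(\kappa)-\mathcal{S}(0)]+[\mathcal{S}(0)-\mathcal{K}]$ for the mapping properties, and the same two-term decomposition $\mathcal{P}_n\mathcal{G}(\kappa)(\mathcal{P}_n-\mathcal{I})+(\mathcal{P}_n-\mathcal{I})\mathcal{G}(\kappa)$ with \eqref{sec3:proj} applied at orders $r=p+2$ and $r=p$ respectively. Your explicit observation that the first term must use the low-order bound $\mathcal{G}(\kappa):H^{0}(I)\to H^{2}(I)$ (valid since $p\geq1$ gives $\Gamma\in C^{3,1}$) is exactly the step the paper performs, only stated there more tersely.
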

	
	\begin{proof}
		 Due to (iv) of Theorem \ref{thm:Sec2} and Lemma \ref{sec3:lem1}, $\mathcal{S}(\kappa)-\mathcal{S}(0)$ and
		 $\mathcal{S}(0)-\mathcal{K}$ are bounded from $H^{p}(I)$ to $H^{p+2}(I)$.
		  This leads to the boundedness of $\mathcal{G}(\kappa)$  from $H^{p}(I)$ to $H^{p+2}(I)$ and 
		 the compactness of $\mathcal{G}(\kappa)$  from $H^{p}(I)$ to $H^{p+1}(I)$. 
		
		For $\phi\in H^{1}(I)$,
		\begin{equation*}
			[\mathcal{G}(\kappa)-\mathcal{G}_n(\kappa)]\phi=[\mathcal{G}(\kappa)-\mathcal{P}_n\mathcal{G}(\kappa)]\phi+[\mathcal{P}_n\mathcal{G}(\kappa)-\mathcal{G}_n(\kappa)]\phi.
		\end{equation*}
	    Due to Theorem \ref{thm:Sec2} and \eqref{sec3:proj}, 
	    there exist an integer $n_0\in\mathbb{N}$ and a positive constant $c$ such that, for all $n\in\mathbb{N}$ with $n>n_0$ and $\phi\in H^{p}(I)$,
	    \begin{align*}
	    	\norm{[\mathcal{G}(\kappa)-\mathcal{P}_n\mathcal{G}(\kappa)]\phi}_{2}
	    	=& \, \norm{[\mathcal{I}-\mathcal{P}_n]\mathcal{G}(\kappa)\phi}_{2} \\
	    	\leq &\, c n^{-p}\norm{\mathcal{G}(\kappa)\phi}_{p+2}\\
	    	\leq&\, c n^{-p}\norm{\phi}_{p}
	    \end{align*} 
	    and 
	    \begin{align*}
	    	\norm{[\mathcal{P}_n\mathcal{G}(\kappa)-\mathcal{G}_n(\kappa)]\phi}_{2} = &\norm{[\mathcal{P}_n\mathcal{G}(\kappa)-\mathcal{P}_n\mathcal{G}(\kappa)\mathcal{P}_n]\phi}_{2} \\
	    	\leq &\, c\norm{\mathcal{G}(\kappa)[\mathcal{I}-\mathcal{P}_n]\phi}_{2}\\
	    	\leq&\, c \norm{[\mathcal{I}-\mathcal{P}_n]\phi}_{0} \\
	    	\leq &\,c n^{-p}\norm{\phi}_{p}.
	    \end{align*} 
	    This yields \eqref{convergenceOrderGnE} and the proof is complete.

	\end{proof}
	
	 We now show the convergence of the Fourier Galerkin method for $\mathcal{S}(\kappa)$, i.e., the convergence of eigenvalues of $\mathcal{S}_n(\kappa)$ to those of $\mathcal{S}(\kappa)$.

	\begin{theorem}\label{sec3:thm1}
		Let $\Gamma$ be of $C^{p+3,1}$ for $p\geq1$. $\mathcal{S}(\kappa):H^1(I)\mapsto H^2(I)$ is Fredholm of index zero. For $\lambda\in\sigma(\mathcal{S}(\kappa))$,
		there exist an integer $n_0\in\mathbb{N}$, a positive constant $c$  and a sequence $\lambda_n\in\sigma(\mathcal{S}_n(\kappa))$ for all $n\in\mathbb{N}$ with $n>n_0$, such that, 
		$\lambda_n\to\lambda$ as $n\to\infty$ and 
		\begin{equation*}
			\abs{\lambda_n-\lambda}\leq cn^{-p/\tau},
		\end{equation*}
		\begin{equation*}
			\inf\limits_{v\in{\rm Ker}(\mathcal{S}(\lambda))}\{\norm{v_n^0-\mathcal{P}_nv}_{1}\}\leq c n^{-p/\tau},
		\end{equation*}
		where $v_n^0\in{\rm Ker}(\mathcal{S}_n(\lambda_n))$ with $\norm{v_n^0}_{1}=1$, and $\tau$ is the ascent of $\lambda$.
	\end{theorem}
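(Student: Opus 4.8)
The plan is to recover the statement from the abstract approximation theorem, Theorem~\ref{sec3:thm0}, applied with $\mathbb{X}=H^1(I)$, $\mathbb{Y}=H^2(I)$, $\mathbb{X}_n=\mathbb{Y}_n=\mathbb{T}_n$ carrying the $H^1$- and $H^2$-norm respectively, $\mathcal{L}_n=\mathcal{Q}_n=\mathcal{P}_n$, $\mathcal{F}(\kappa)=\mathcal{S}(\kappa)$ and $\mathcal{F}_n(\kappa)=\mathcal{S}_n(\kappa)=\mathcal{P}_n\mathcal{S}(\kappa)\mathcal{P}_n$; once the approximation properties (A1)--(A4) are verified and $\epsilon_n$ is estimated, the two displayed bounds follow from that theorem with $C\epsilon_n^{1/\tau}\le cn^{-p/\tau}$.

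First I would set up the functional-analytic framework. Splitting $\mathcal{S}(\kappa)=\mathcal{K}+\mathcal{G}(\kappa)$, Lemma~\ref{sec3:lem1}(i) (with $p=1$) gives that $\mathcal{K}\colon H^1(I)\to H^2(I)$ is a bounded isomorphism, while Lemma~\ref{sec3:lem2} (with $p=1$, using that $\Gamma\in C^{p+3,1}\subseteq C^{4,1}$) gives that $\mathcal{G}(\kappa)\colon H^1(I)\to H^2(I)$ is compact; hence $\mathcal{S}(\kappa)\colon H^1(I)\to H^2(I)$ is Fredholm of index zero for each $\kappa$. The map $\kappa\mapsto\mathcal{S}(\kappa)$ is holomorphic on a small closed disk $\Theta$ centered at $\lambda$: the singular part $\mathcal{K}$ and the part $\mathcal{S}(0)-\mathcal{K}$ are $\kappa$-independent, while the kernel of $\mathcal{S}(\kappa)-\mathcal{S}(0)$ involves only $\Phi(\cdot,\cdot;\kappa)-\Phi_0$, which is smooth across the diagonal (the logarithmic singularities cancel) and holomorphic in $\kappa$ on $\Theta$, so $\kappa\mapsto\mathcal{S}(\kappa)-\mathcal{S}(0)$ is holomorphic as a map into the bounded operators from $H^1(I)$ to $H^2(I)$. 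Since the zeros of $\mathcal{S}(\cdot)$ form a discrete set in $\{\mathrm{Im}\,\kappa<0\}$ (\cite{Taylor1996}), $\Theta$ can be chosen so small that $\rho(\mathcal{S})\supseteq\Theta\setminus\{\lambda\}\neq\varnothing$, which makes Theorem~\ref{sec3:thm0} applicable.

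Next I would verify (A1)--(A4). Property (A1) holds because the $\mathcal{P}_n$ are orthogonal projections with $\|\mathcal{P}_n\|_{H^r\to H^r}\le1$ and $\|\mathcal{P}_n w-w\|_r\to0$ by \eqref{sec3:proj0}. Property (A2) holds because $\|\mathcal{S}_n(\kappa)\|_{H^1\to H^2}\le\|\mathcal{S}(\kappa)\|_{H^1\to H^2}$ and the right side is bounded uniformly in $\kappa\in\Theta$ by continuity on the compact set $\Theta$. For (A3), $\mathcal{S}_n(\kappa)\mathcal{P}_n w-\mathcal{P}_n\mathcal{S}(\kappa)w=\mathcal{P}_n\mathcal{S}(\kappa)(\mathcal{P}_n-\mathcal{I})w$, whose $H^2$-norm is at most $\|\mathcal{S}(\kappa)\|_{H^1\to H^2}\|(\mathcal{P}_n-\mathcal{I})w\|_1\to0$. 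The heart of the matter is the regularity property (A4), which I expect to be the main obstacle. Take $x_n\in\mathbb{T}_n$ with $\|x_n\|_1\le1$ such that $\{\mathcal{S}_n(\kappa)x_n\}$ is relatively compact in $H^2(I)$. Because $\mathcal{P}_nx_n=x_n$ and $\mathcal{K}$ is diagonal on $\{e_m\}$ (hence maps $\mathbb{T}_n$ into $\mathbb{T}_n$), one has $\mathcal{S}_n(\kappa)x_n=\mathcal{K}x_n+\mathcal{P}_n\mathcal{G}(\kappa)x_n$. Since $\mathcal{G}(\kappa)\colon H^1(I)\to H^2(I)$ is compact and $\mathcal{P}_n\to\mathcal{I}$ strongly, $\{\mathcal{P}_n\mathcal{G}(\kappa)x_n\}$ is relatively compact in $H^2(I)$; therefore $\{\mathcal{K}x_n\}=\{\mathcal{S}_n(\kappa)x_n-\mathcal{P}_n\mathcal{G}(\kappa)x_n\}$ is relatively compact in $H^2(I)$, and the lower bound $\|x_n-x_m\|_1\le\|\mathcal{K}(x_n-x_m)\|_2$ from Lemma~\ref{sec3:lem1}(i) upgrades this to relative compactness of $\{x_n\}$ in $H^1(I)$, which is (A4). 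This is the step where the structural decomposition $\mathcal{S}(\kappa)=\mathcal{K}+\mathcal{G}(\kappa)$, the lower bound of Lemma~\ref{sec3:lem1}(i), and the compactness furnished by Lemma~\ref{sec3:lem2} must be combined.

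Finally I would estimate $\epsilon_n$. For $\eta$ near $\lambda$ and $v\in\Lambda(\lambda)$ one has $\mathcal{S}_n(\eta)\mathcal{P}_nv-\mathcal{P}_n\mathcal{S}(\eta)v=-\mathcal{P}_n\mathcal{S}(\eta)(\mathcal{I}-\mathcal{P}_n)v$, so $\epsilon_n\le\max_{|\eta-\lambda|\le\delta}\sup_{v\in\Lambda(\lambda),\,\|v\|_1=1}\|\mathcal{S}(\eta)(\mathcal{I}-\mathcal{P}_n)v\|_2$. The remaining ingredient, and a secondary delicate point, is that every generalized eigenfunction lies in $H^{p+1}(I)$: writing the Jordan-chain relations as $\mathcal{K}v_l=-\mathcal{G}(\lambda)v_l-\sum_{j\ge1}\tfrac{1}{j!}\mathcal{S}^{(j)}(\lambda)v_{l-j}$ and using that $\mathcal{G}(\lambda)$ and its $\kappa$-derivatives $\mathcal{S}^{(j)}(\lambda)$ gain two Sobolev orders (Theorem~\ref{thm:Sec2}(iv), Lemma~\ref{sec3:lem1}(iv), and their $\kappa$-derivative analogues) while $\mathcal{K}^{-1}$ loses only one order, a finite bootstrap in the Sobolev index — terminating at $H^{p+1}(I)$ because $\Gamma\in C^{p+3,1}$ — places the finite-dimensional space $\Lambda(\lambda)$ inside $H^{p+1}(I)$, so $c_\lambda:=\sup_{v\in\Lambda(\lambda),\,\|v\|_1=1}\|v\|_{p+1}<\infty$. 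Then \eqref{sec3:proj} with $m=1$, $r=p+1$ gives $\|(\mathcal{I}-\mathcal{P}_n)v\|_1\le cn^{-p}\|v\|_{p+1}$, whence $\|\mathcal{S}(\eta)(\mathcal{I}-\mathcal{P}_n)v\|_2\le\|\mathcal{S}(\eta)\|_{H^1\to H^2}\,cn^{-p}\|v\|_{p+1}$ and $\epsilon_n\le cn^{-p}\to0$. Substituting $\epsilon_n\le cn^{-p}$ into Theorem~\ref{sec3:thm0} yields $\lambda_n\to\lambda$, $|\lambda_n-\lambda|\le cn^{-p/\tau}$, and the stated eigenfunction estimate, which completes the proof; the bootstrap regularity $\Lambda(\lambda)\subset H^{p+1}(I)$ is precisely what turns plain convergence into the rate $n^{-p/\tau}$.
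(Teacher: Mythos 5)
Your proposal is correct and follows essentially the same route as the paper: the splitting $\mathcal{S}(\kappa)=\mathcal{K}+\mathcal{G}(\kappa)$, verification of (A1)--(A4) for $\mathcal{S}_n(\kappa)=\mathcal{P}_n\mathcal{S}(\kappa)\mathcal{P}_n$ between $H^1(I)$ and $H^2(I)$, the bound $\epsilon_n\leq cn^{-p}$, and an appeal to Theorem~\ref{sec3:thm0}. The only differences are that you prove the regularity property (A4) directly from Lemma~\ref{sec3:lem1}(i) and the compactness of $\mathcal{G}(\kappa)$ (where the paper cites Theorem 2.9 of \cite{GongSun2023ETNA}), and that you justify $\Lambda(\lambda)\subset H^{p+1}(I)$ by a bootstrap on the Jordan-chain relations (where the paper appeals tersely to norm equivalence on the finite-dimensional space $\Lambda(\lambda)$); both are fillings-in of the same argument rather than a different approach.
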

	\begin{proof}
		
		Let $\Theta$ be compact and $\kappa \in \Theta$. Since $\mathcal{G}(\cdot)$ is compact from $H^{1}(I)$ to $H^{2}(I)$, and  $\mathcal{K}$ is a bounded invertible operator from $H^{1}(I)$ to $H^{2}(I)$, $\mathcal{S}(\cdot)$ is Fredholm of index zero on $\Theta$.
		
		It is clear that (A1)-(A3) hold due to the property of $\mathcal{P}_n$, the definition of $\mathcal{S}_n$, and Lemma \ref{sec3:lem2}. By Theorem 2.9 in \cite{GongSun2023ETNA}, Lemma \ref{sec3:lem1} and Lemma \ref{sec3:lem2}, we have that (A4) holds, i.e., the regular convergence of $\mathcal{S}_n$ to $\mathcal{S}$.
		Note that 
		\begin{align*}
			\epsilon_n&=\max\limits_{\abs{\eta-\lambda}\leq\delta}\max\limits_{v\in\Lambda(\lambda),\norm{v}_{1}=1}\{\norm{\mathcal{S}_n(\eta)\mathcal{P}_nv-\mathcal{P}_n\mathcal{S}(\eta)v}_{2}\}\\
			&=\max\limits_{\abs{\eta-\lambda}\leq\delta}\max\limits_{v\in\Lambda(\lambda),\norm{v}_{1}=1}\{\norm{\mathcal{G}_n(\eta)\mathcal{P}_nv-\mathcal{P}_n\mathcal{G}(\eta)v}_{2}\} \\
			& \le c n^{-p}
		\end{align*}
		due to the fact that $\Lambda(\lambda)$ is finite-dimensional and thus the two norms $\|\cdot\|_1$ and $\|\cdot \|_p$ are equivalent.
		 Then the estimates follow from Theorem \ref{sec3:thm0}.
	\end{proof}

	\begin{remark}
		In \cite{SteinbachUnger2012}, a boundary element method is proposed to compute the Dirichlet Laplacian eigenvalues, which are the real eigenvalues of $\mathcal{S}(\cdot)$. It is also mentioned that the same method can be used to compute the scattering poles, which are complex eigenvalues of $\mathcal{S}(\cdot)$. 
	\end{remark}
	
	We move on to the eigenvalue problem for $\mathcal{I}+\mathcal{D}(\kappa)$. Assume that  $\Gamma$ is of $C^{p+3,1}$  with $p\geq1$. Let $\kappa_+$ be a complex number with positive imaginary part. From  Theorem \ref{thm:Sec2}, $(\mathcal{I}+\mathcal{D}(\kappa_+))^{-1}:H^p(I)\mapsto H^p(I)$ is bounded. Define
	\begin{equation*}
		\mathcal{H}(\kappa):=\mathcal{I}+\mathcal{T}(\kappa),
	\end{equation*}
	where
	\begin{equation*}
		\mathcal{T}(\kappa):=(\mathcal{I}+\mathcal{D}(\kappa_+))^{-1}[\mathcal{D}(\kappa)-\mathcal{D}(\kappa_+)].
	\end{equation*}
	Note that  $\mathcal{H}(\kappa)$ and $\mathcal{I}+\mathcal{D}(\kappa)$ have the same eigenvalues due to the fact that
	\[
	\mathcal{H}(\kappa) = (\mathcal{I}+\mathcal{D}(\kappa_+))^{-1}(\mathcal{I}+\mathcal{D}(\kappa)).
	\]
	Consequently, we can approximate the eigenvalues of $\mathcal{H}(\kappa)$ by the Fourier Galerkin method, i.e., projecting the operator $\mathcal{T}(\kappa)$  onto the space $\mathbb{T}_n$. Let $\mathcal{T}_n(\kappa):=\mathcal{P}_n\mathcal{T}(\kappa)\mathcal{P}_n$. 	
	\begin{lemma}\label{sec3:lem3}
		Let $\Gamma$ be of $C^{p+3,1}$  with $p\geq1$. 
		\begin{itemize}
			\item[(1)] $\mathcal{T}(\kappa):H^p(I)\mapsto H^p(I)$ is compact. 
			\item[(2)] There exist an integer $n_0\in\mathbb{N}$ and a positive constant $c$ such that, for all $n\in\mathbb{N}$ with $n>n_0$ and $\phi\in H^{p}(I)$,
           \begin{equation*}\label{convergenceOrderRnE} 
	        \norm{[\mathcal{T}_n(\kappa)-\mathcal{T}(\kappa)]\phi}_{1}\leq c n^{-p}\norm{\phi}_{p}.
          \end{equation*}	
		\end{itemize}
		
	\end{lemma}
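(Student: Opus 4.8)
The plan is to mimic the structure of the proof of Lemma~\ref{sec3:lem2}, exploiting the smoothing properties of the difference operator $\mathcal{D}(\kappa)-\mathcal{D}(\kappa_+)$ recorded in Theorem~\ref{thm:Sec2}(ii). First I would establish part (1). By Theorem~\ref{thm:Sec2}(ii), $\mathcal{D}(\kappa)-\mathcal{D}(\kappa_+):H^p(I)\to H^{p+1}(I)$ is bounded (and compact), and by Theorem~\ref{thm:Sec2}(iii) the operator $(\mathcal{I}+\mathcal{D}(\kappa_+))^{-1}:H^p(I)\to H^p(I)$ is bounded since $\kappa_+$ has positive imaginary part. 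Composing, $\mathcal{T}(\kappa)=(\mathcal{I}+\mathcal{D}(\kappa_+))^{-1}[\mathcal{D}(\kappa)-\mathcal{D}(\kappa_+)]$ maps $H^p(I)$ boundedly into $H^{p+1}(I)$; since the embedding $H^{p+1}(I)\hookrightarrow H^p(I)$ is compact, $\mathcal{T}(\kappa):H^p(I)\to H^p(I)$ is compact. (Alternatively one can write $\mathcal{T}(\kappa)$ as the composition of a bounded operator with the compact operator $\mathcal{D}(\kappa)-\mathcal{D}(\kappa_+):H^p(I)\to H^{p+1}(I)\subset H^p(I)$.)

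For part (2), I would split the error exactly as before:
\begin{equation*}
[\mathcal{T}(\kappa)-\mathcal{T}_n(\kappa)]\phi=[\mathcal{I}-\mathcal{P}_n]\mathcal{T}(\kappa)\phi+\mathcal{P}_n\mathcal{T}(\kappa)[\mathcal{I}-\mathcal{P}_n]\phi.
\end{equation*}
For the first term, I would use the fact that $\mathcal{T}(\kappa)\phi\in H^{p+1}(I)$ with $\norm{\mathcal{T}(\kappa)\phi}_{p+1}\le c\norm{\phi}_p$ (from the boundedness just established), together with the projection estimate \eqref{sec3:proj} applied with $m=1$ and $r=p+1$, to get $\norm{[\mathcal{I}-\mathcal{P}_n]\mathcal{T}(\kappa)\phi}_{1}\le cn^{1-(p+1)}\norm{\phi}_p=cn^{-p}\norm{\phi}_p$. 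For the second term, I would use the uniform boundedness of $\mathcal{P}_n$ on $H^1(I)$, the boundedness of $\mathcal{T}(\kappa):H^0(I)\to H^1(I)$ (again via Theorem~\ref{thm:Sec2}(ii) with a shift in index, or by interpolation/monotonicity of the scale), and the projection estimate $\norm{[\mathcal{I}-\mathcal{P}_n]\phi}_0\le cn^{-p}\norm{\phi}_p$. Adding the two bounds yields the claimed estimate.

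The main obstacle I anticipate is a bookkeeping issue regarding which Sobolev indices the smoothing of $\mathcal{D}(\kappa)-\mathcal{D}(\kappa_+)$ is actually available on: Theorem~\ref{thm:Sec2}(ii) is stated for the fixed regularity level $p$ tied to the smoothness $C^{p+3,1}$ of $\Gamma$, whereas the argument for the second term wants the mapping property $H^0(I)\to H^1(I)$. One must either invoke that $\Gamma\in C^{p+3,1}$ with $p\ge1$ already gives enough regularity to run the $j=0,1,\dots,p$ versions of these mapping properties (so the estimate on the lower-index piece is legitimate), or argue directly that $\mathcal{P}_n\mathcal{T}(\kappa):H^0(I)\to H^1(I)$ is bounded uniformly in $n$ because $\mathcal{T}(\kappa):H^0\to H^1$ is and $\mathcal{P}_n$ is a uniformly bounded projection on $H^1$. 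A second, minor point is justifying that the constant $c$ and threshold $n_0$ can be taken uniform — here that is immediate since $\kappa$ (and $\kappa_+$) are fixed in the statement, so no $\kappa$-dependence needs tracking, unlike in the eigenvalue theorem where one later takes a maximum over a compact set $\Theta$.
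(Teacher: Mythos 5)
Your proposal is correct and follows essentially the same route as the paper's proof: the same composition argument for compactness in part (1), and the same splitting $[\mathcal{I}-\mathcal{P}_n]\mathcal{T}(\kappa)\phi+\mathcal{P}_n\mathcal{T}(\kappa)[\mathcal{I}-\mathcal{P}_n]\phi$ with the same use of the smoothing of $\mathcal{D}(\kappa)-\mathcal{D}(\kappa_+)$ and the projection estimate \eqref{sec3:proj} in part (2). The index-bookkeeping issue you flag (needing the mapping property $H^0(I)\to H^1(I)$ for the low-order piece) is real but the paper uses it silently in exactly the way you propose to justify it.
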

	
	\begin{proof}
		(1) According Theorem \ref{thm:Sec2}, the compactness of $\mathcal{T}(\kappa)$ follows from the compactness of  the  operator $\mathcal{D}(\kappa)-\mathcal{D}(\kappa_+)$ from $H^{p}(I)$ to $H^{p}(I)$ and boundedness of the operator $(\mathcal{I}+\mathcal{D}(\kappa_+))^{-1}:H^p(I)\mapsto H^p(I)$ for $\kappa_+$ with positive imaginary part.
		
		(2) Note that, for $\phi\in H^{1}(I)$,
		\begin{equation*}
			[\mathcal{T}(\kappa)-\mathcal{T}_n(\kappa)]\phi=[\mathcal{T}(\kappa)-\mathcal{P}_n\mathcal{T}(\kappa)]\phi+[\mathcal{P}_n\mathcal{T}(\kappa)-\mathcal{T}_n(\kappa)]\phi.
		\end{equation*}
		Due to Theorem \ref{thm:Sec2} and \eqref{sec3:proj},
		there exist an integer $n_0\in\mathbb{N}$ and a positive constant $c$ such that, for all $n\in\mathbb{N}$ with $n>n_0$ and $\phi\in H^{p}(I)$,
	    \begin{align*}
	    	\norm{[\mathcal{T}(\kappa)-\mathcal{P}_n\mathcal{T}(\kappa)]\phi}_{1}
	    	=&\, \norm{[\mathcal{I}-\mathcal{P}_n]\mathcal{T}(\kappa)\phi}_{1} \\
	    	\leq&\, c n^{-p}\norm{\mathcal{T}(\kappa)\phi}_{p+1}\\
	    	\leq&\, c n^{-p}\norm{[\mathcal{D}(\kappa)-\mathcal{D}(\kappa_+)]\phi}_{p+1} \\
	    	\leq&\, c n^{-p}\norm{\phi}_{p}
	    \end{align*} 
	    and
	    \begin{align*}
	    	\norm{[\mathcal{P}_n\mathcal{T}(\kappa)-\mathcal{T}_n(\kappa)]\phi}_{1}
	    	\leq&\, c\norm{\mathcal{T}(\kappa)[\mathcal{I}-\mathcal{P}_n]\phi}_{1}\\
	    	\leq&\, c \norm{[\mathcal{D}(\kappa)-\mathcal{D}(\kappa_+)][\mathcal{I}-\mathcal{P}_n]\phi}_{1}\\
	    	\leq&\, c \norm{[\mathcal{I}-\mathcal{P}_n]\phi}_{0} \\
	    	\leq& \,c n^{-p}\norm{\phi}_{p}.
	    \end{align*} 
	    The proof is complete.

	\end{proof}

	We now present the convergence theorem for approximating the eigenvalues of $\mathcal{H}(\kappa)$. The discrete operator is defined as
	\begin{equation*}
		\mathcal{H}_{n}(\kappa):=\mathcal{I}_n+\mathcal{T}_{n}(\kappa),
	\end{equation*}
	where $\mathcal{I}_n:=\mathcal{P}_n\mathcal{I}\mathcal{P}_n$.
	
	\begin{theorem}\label{sec3:thm2}
		Let $\Gamma$ be of $C^{p+3,1}$ for $p\geq1$. $\mathcal{H}(\kappa):H^1(I)\mapsto H^1(I)$ is Fredholm of index zero. For $\lambda\in\sigma(\mathcal{H})$,
		there exist an integer $n_0\in\mathbb{N}$, a positive constant $c$  and a sequence $\lambda_n\in\sigma(\mathcal{H}_n)$ for all $n\in\mathbb{N}$ with $n>n_0$, such that, 
		$\lambda_n\to\lambda$ as $n\to\infty$ and 
		\begin{equation*}
			\abs{\lambda_n-\lambda}\leq cn^{-p/\tau},
		\end{equation*}
		\begin{equation*}
			\inf\limits_{v\in{\rm Ker}(\mathcal{H}(\lambda))}\{\norm{v_n^0-\mathcal{P}_nv}_{1}\}\leq c n^{-p/\tau},
		\end{equation*}
		where $v_n^0\in{\rm Ker}(\mathcal{H}_n(\lambda_n))$ with $\norm{v_n^0}_{1}=1$, and $\tau$ is the ascent of $\lambda$.
	\end{theorem}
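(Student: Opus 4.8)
The plan is to verify that the quadruple $(\mathbb{X},\mathbb{Y},\mathbb{X}_n,\mathbb{Y}_n) = (H^1(I),H^1(I),\mathbb{T}_n,\mathbb{T}_n)$ together with the operator functions $\mathcal{H}(\cdot)$ and $\mathcal{H}_n(\cdot)$ satisfies the hypotheses of the abstract framework, and then to invoke Theorem \ref{sec3:thm0}. The choice of connecting maps is $\mathcal{L}_n=\mathcal{Q}_n=\mathcal{P}_n$. First I would establish that $\mathcal{H}(\kappa)=\mathcal{I}+\mathcal{T}(\kappa)$ is Fredholm of index zero on a compact region $\Theta$: this is immediate from part (1) of Lemma \ref{sec3:lem3}, which gives the compactness of $\mathcal{T}(\kappa):H^1(I)\to H^1(I)$ (taking $p=1$), so $\mathcal{H}(\kappa)$ is a compact perturbation of the identity. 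Holomorphy of $\mathcal{H}(\cdot)$ follows from holomorphy of $\kappa\mapsto\mathcal{D}(\kappa)$ and the fact that $(\mathcal{I}+\mathcal{D}(\kappa_+))^{-1}$ is a fixed bounded operator. The resolvent set is nonempty because, by Theorem \ref{thm:Sec2}(iii), $\mathcal{I}+\mathcal{D}(\kappa)$ is invertible for $\kappa$ with positive imaginary part, hence so is $\mathcal{H}(\kappa)=(\mathcal{I}+\mathcal{D}(\kappa_+))^{-1}(\mathcal{I}+\mathcal{D}(\kappa))$; and since $\mathcal{H}(\kappa)$ and $\mathcal{I}+\mathcal{D}(\kappa)$ share eigenvalues, $\sigma(\mathcal{H})$ consists exactly of the scattering poles in $\Theta$.

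Next I would check (A1)–(A4). Property (A1) is trivial with $\mathcal{L}_n=\mathcal{Q}_n=\mathcal{P}_n$ since $\norm{\mathcal{P}_n w}_1\to\norm{w}_1$ by \eqref{sec3:proj0}. For (A2), equiboundedness of $\{\mathcal{H}_n(\kappa)\}$ on $\Theta$ follows from $\norm{\mathcal{H}_n(\kappa)}\le 1+\norm{\mathcal{P}_n}^2\norm{\mathcal{T}(\kappa)}\le 1+\sup_{\kappa\in\Theta}\norm{\mathcal{T}(\kappa)}$, where the supremum is finite by holomorphy and compactness of $\Theta$. For (A3), the approximation property, I would write
\[
\mathcal{H}_n(\kappa)\mathcal{P}_n w-\mathcal{P}_n\mathcal{H}(\kappa)w
=\mathcal{P}_n w-\mathcal{P}_n w+[\mathcal{T}_n(\kappa)-\mathcal{T}(\kappa)]\mathcal{P}_n w+\mathcal{P}_n\mathcal{T}(\kappa)[\mathcal{P}_n-\mathcal{I}]w+[\mathcal{P}_n\mathcal{T}(\kappa)\mathcal{P}_n-\mathcal{T}_n(\kappa)]\text{-terms},
\]
but more cleanly: $\mathcal{H}_n(\kappa)\mathcal{P}_n w-\mathcal{P}_n\mathcal{H}(\kappa)w = [\mathcal{T}_n(\kappa)-\mathcal{T}(\kappa)]w + \mathcal{P}_n\mathcal{T}(\kappa)(\mathcal{I}-\mathcal{P}_n)w$ after noting $\mathcal{P}_n\mathcal{P}_n=\mathcal{P}_n$; both terms tend to zero in $H^1(I)$ — the first by part (2) of Lemma \ref{sec3:lem3} applied to $w\in H^1(I)$ (density plus uniform bound, or directly on the dense subspace and an $\eps$-argument), the second by boundedness of $\mathcal{T}(\kappa)$ and \eqref{sec3:proj0}.

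Property (A4), the regularity of $\{\mathcal{H}_n(\kappa)\}$, is the main obstacle. Here I would use the structure $\mathcal{H}_n(\kappa)=\mathcal{I}_n+\mathcal{T}_n(\kappa)$: suppose $\norm{x_n}_1\le 1$ and $\{\mathcal{H}_n(\kappa)x_n\}$ is relatively compact in $H^1(I)$. Since $x_n\in\mathbb{T}_n$ we have $\mathcal{I}_n x_n=x_n$, so $x_n = \mathcal{H}_n(\kappa)x_n - \mathcal{T}_n(\kappa)x_n$. It therefore suffices to show $\{\mathcal{T}_n(\kappa)x_n\}$ is relatively compact. Write $\mathcal{T}_n(\kappa)x_n = \mathcal{P}_n\mathcal{T}(\kappa)x_n$ (again using $\mathcal{P}_n x_n=x_n$). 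By Lemma \ref{sec3:lem3}(1), $\mathcal{T}(\kappa):H^1(I)\to H^1(I)$ is compact, so $\{\mathcal{T}(\kappa)x_n\}$ is relatively compact; and since $\mathcal{P}_n\to\mathcal{I}$ strongly and the $\mathcal{P}_n$ are uniformly bounded, $\{\mathcal{P}_n\mathcal{T}(\kappa)x_n\}$ is relatively compact (a standard fact: a uniformly bounded, strongly convergent sequence of operators maps a relatively compact set to a relatively compact set, via a diagonal/$\eps$-net argument). Hence $\{x_n\}$ is relatively compact, proving (A4). Alternatively, this is exactly the abstract regular-convergence statement for $\mathcal{I}+$ compact with Galerkin projections, analogous to the $\mathcal{S}(\kappa)$ case handled via \cite{GongSun2023ETNA}.

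Finally, having verified (A1)–(A4), Theorem \ref{sec3:thm0} applies directly and yields the existence of $n_0$, $c$, and $\lambda_n\in\sigma(\mathcal{H}_n)$ with $\lambda_n\to\lambda$ and the stated bounds, where $\epsilon_n$ is controlled by $\max_{\abs{\eta-\lambda}\le\delta}\max_{v\in\Lambda(\lambda),\norm{v}_1=1}\norm{\mathcal{T}_n(\eta)\mathcal{P}_n v-\mathcal{P}_n\mathcal{T}(\eta)v}_1$. As in the proof of Theorem \ref{sec3:thm1}, $\Lambda(\lambda)$ is finite-dimensional so $\norm{\cdot}_1$ and $\norm{\cdot}_p$ are equivalent on it, and Lemma \ref{sec3:lem3}(2) gives $\epsilon_n\le c\,n^{-p}$ (the $\eta$-dependence is uniform on the compact disc $\abs{\eta-\lambda}\le\delta$ because the constant in Lemma \ref{sec3:lem3}(2) depends only on $\sup_{\eta}\norm{\mathcal{D}(\eta)-\mathcal{D}(\kappa_+)}$). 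Substituting $\epsilon_n\le c\,n^{-p}$ into the abstract estimates of Theorem \ref{sec3:thm0} produces $\abs{\lambda_n-\lambda}\le c\,n^{-p/\tau}$ and the eigenfunction estimate, completing the proof.
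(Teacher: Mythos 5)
Your proposal is correct and follows essentially the same route as the paper: establish that $\mathcal{H}(\kappa)=\mathcal{I}+\mathcal{T}(\kappa)$ is Fredholm of index zero via the compactness of $\mathcal{T}(\kappa)$ from Lemma \ref{sec3:lem3}, verify (A1)--(A4) for the pair $(\mathcal{H},\mathcal{H}_n)$ with $\mathcal{L}_n=\mathcal{Q}_n=\mathcal{P}_n$, and conclude from Theorem \ref{sec3:thm0} using the $O(n^{-p})$ bound of Lemma \ref{sec3:lem3}(2) together with the norm equivalence on the finite-dimensional space $\Lambda(\lambda)$. The only difference is that where the paper cites Theorem 2.9 of \cite{GongSun2023ETNA} for the regularity property (A4), you supply the standard direct argument ($x_n=\mathcal{H}_n(\kappa)x_n-\mathcal{P}_n\mathcal{T}(\kappa)x_n$ with $\mathcal{T}(\kappa)$ compact), which is a valid, self-contained substitute.
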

	\begin{proof}
		
		Let $\Theta$ be compact and $\kappa \in \Theta$. Since  the  operator $\mathcal{T}(\kappa)$ is compact from $H^{1}(I)$ to $H^{1}(I)$, $\mathcal{H}(\kappa)$ is Fredholm of index zero.
		
		It is clear that (A1)-(A3) hold due to the property of $\mathcal{P}_n$, the definition of $\mathcal{T}_n$, and Lemma \ref{sec3:lem3}. By Theorem 2.9 in \cite{GongSun2023ETNA}, we have that (A4) holds, i.e., $\mathcal{H}_n$ converges to $\mathcal{H}$ regularly. Then the error estimates follow from Theorem~\ref{sec3:thm0}.
	\end{proof}
	
	We have proved the convergence of the eigenvalues of $\mathcal{H}_{n}(\kappa)$ to those of $\mathcal{H}(\kappa)$, which are the eigenvalues of $\mathcal{I}+\mathcal{D}(\kappa)$. In the rest of this section, we show that, for $n$ large enough, the eigenvalues of $\mathcal{H}_{n}(\kappa)$ are the same as $[\mathcal{P}_n(\mathcal{I}+\mathcal{D}(\kappa))\mathcal{P}_n]$. Thus in practice, we compute the eigenvalues of $[\mathcal{P}_n(\mathcal{I}+\mathcal{D}(\kappa))\mathcal{P}_n]$, which is simpler to implement.

	Let $\mathcal{F}(\kappa_+):=\mathcal{I}+\mathcal{D}(\kappa_+)$, $\mathcal{F}_n(\kappa_+):=\mathcal{P}_n[\mathcal{I}+\mathcal{D}(\kappa_+)]\mathcal{P}_n$ and $\mathcal{D}_n(\kappa_+):=\mathcal{P}_n\mathcal{D}(\kappa_+)\mathcal{P}_n$.
	\begin{lemma}\label{sec3:lem4}
	Let $\Gamma$ be of $C^{p+3,1}$ with $p\geq 1$. There exists an integer $n_0\in\mathbb{N}$ such that, for all $n\in\mathbb{N}$ with $n>n_0$, $\mathcal{I}+\mathcal{D}_n(\kappa_+)$ is invertible.
	\end{lemma}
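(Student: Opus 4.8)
The plan is to deduce the invertibility of $\mathcal{I}+\mathcal{D}_n(\kappa_+)$ on $\mathbb{T}_n$ from the convergence properties already established for the pair $(\mathcal{F}_n(\kappa_+),\mathcal{F}(\kappa_+))$, together with the fact that $\mathcal{F}(\kappa_+)=\mathcal{I}+\mathcal{D}(\kappa_+)$ is invertible on $H^p(I)$ (Theorem~\ref{thm:Sec2}(iii), since $\mathrm{Im}\,\kappa_+>0$). The point is that $\mathcal{I}+\mathcal{D}_n(\kappa_+)$, viewed as an operator $\mathbb{T}_n\to\mathbb{T}_n$, coincides with $\mathcal{F}_n(\kappa_+)$ restricted to $\mathbb{T}_n$ (because $\mathcal{P}_n$ acts as the identity on $\mathbb{T}_n$), so it suffices to show $\mathcal{F}_n(\kappa_+):\mathbb{T}_n\to\mathbb{T}_n$ is injective for $n$ large; since $\mathbb{T}_n$ is finite dimensional, injectivity gives invertibility.

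First I would record that $\mathcal{F}_n(\kappa_+)=\mathcal{I}_n+\mathcal{D}_n(\kappa_+)=\mathcal{I}_n+\mathcal{P}_n\mathcal{D}(\kappa_+)\mathcal{P}_n$, and that by Theorem~\ref{thm:Sec2}(i)--(ii) the operator $\mathcal{D}(\kappa_+)$ is compact on $H^1(I)$ (indeed $\mathcal{D}(\kappa_+)-\mathcal{D}(0)$ smooths by one order and $\mathcal{D}(0)$ is compact on $H^1$), so $\mathcal{F}(\kappa_+)$ is a compact perturbation of the identity. I would then invoke the same regular-convergence machinery used in the proof of Theorem~\ref{sec3:thm2}: conditions (A1)--(A3) hold for $\mathcal{F}_n(\kappa_+)\to\mathcal{F}(\kappa_+)$ by the projection estimates \eqref{sec3:proj}--\eqref{sec3:proj0} and the compactness/smoothing of $\mathcal{D}(\kappa_+)-\mathcal{D}(0)$ exactly as in Lemma~\ref{sec3:lem3}, and (A4), the regularity of $\{\mathcal{F}_n(\kappa_+)\}$, follows from Theorem~2.9 of \cite{GongSun2023ETNA}. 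Standard discrete approximation theory (the stability half of the argument behind Theorem~\ref{sec3:thm0}) then says that since $\mathcal{F}(\kappa_+)$ is invertible, the $\mathcal{F}_n(\kappa_+)$ are stably invertible for $n$ large: there is $n_0$ and $c>0$ with $\norm{\mathcal{F}_n(\kappa_+)x_n}_{\mathbb{X}_n}\ge c\norm{x_n}_{\mathbb{X}_n}$ for all $x_n$ and all $n>n_0$.

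Concretely, I would argue by contradiction to extract this stability from (A2)--(A4): if no such $n_0$ exists, pick $n_k\to\infty$ and $x_{n_k}\in\mathbb{T}_{n_k}$ with $\norm{x_{n_k}}_1=1$ and $\norm{\mathcal{F}_{n_k}(\kappa_+)x_{n_k}}_1\to0$. Writing $\mathcal{F}_{n_k}(\kappa_+)x_{n_k}=x_{n_k}+\mathcal{P}_{n_k}\mathcal{D}(\kappa_+)x_{n_k}$, the first term has norm $1$; the sequence $\{\mathcal{D}(\kappa_+)x_{n_k}\}$ is precompact in $H^1(I)$ by compactness of $\mathcal{D}(\kappa_+)$, hence so is $\{\mathcal{P}_{n_k}\mathcal{D}(\kappa_+)x_{n_k}\}$ using \eqref{sec3:proj0}; passing to a subsequence, $x_{n_k}=\mathcal{F}_{n_k}(\kappa_+)x_{n_k}-\mathcal{P}_{n_k}\mathcal{D}(\kappa_+)x_{n_k}$ converges in $H^1(I)$ to some $x$ with $\norm{x}_1=1$, and taking limits (using (A3)) gives $\mathcal{F}(\kappa_+)x=0$, contradicting invertibility of $\mathcal{F}(\kappa_+)$. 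This establishes the uniform lower bound; in particular $\mathcal{F}_n(\kappa_+)=\mathcal{I}+\mathcal{D}_n(\kappa_+)$ is injective on the finite-dimensional space $\mathbb{T}_n$ for $n>n_0$, hence invertible there.

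The main obstacle is purely a matter of bookkeeping: making precise that ``$\mathcal{I}+\mathcal{D}_n(\kappa_+)$ is invertible'' refers to the operator acting on $\mathbb{T}_n$ (equivalently, that $\mathcal{I}_n+\mathcal{D}_n(\kappa_+)$ is invertible on all of $H^1(I)$ is false, but its restriction to $\mathbb{T}_n$ is what matters, and $\mathcal{P}_n$ being a projection makes the restriction and the compression agree), and ensuring that the compactness used to close the contradiction argument is the compactness of $\mathcal{D}(\kappa_+)$ on $H^1$ rather than the stronger smoothing statement. No genuinely new estimate is needed beyond what Lemma~\ref{sec3:lem3} and Theorem~\ref{thm:Sec2} already supply.
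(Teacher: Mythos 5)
Your argument is correct, but it takes a different route from the paper. The paper's proof is constructive: it builds an explicit approximate left inverse $\mathcal{J}_n(\kappa_+):=\mathcal{I}-\mathcal{F}^{-1}(\kappa_+)\mathcal{D}_n(\kappa_+)$, verifies the identity $\mathcal{J}_n(\kappa_+)(\mathcal{I}+\mathcal{D}_n(\kappa_+))=\mathcal{I}-\mathcal{W}_n$ with $\mathcal{W}_n=\mathcal{F}^{-1}(\kappa_+)[\mathcal{D}_n(\kappa_+)-\mathcal{D}(\kappa_+)]\mathcal{D}_n(\kappa_+)$, and concludes via a Neumann series once $\norm{\mathcal{W}_n}<1$; this treats $\mathcal{I}+\mathcal{D}_n(\kappa_+)$ as an operator on all of $H^p(I)$ (with the full identity $\mathcal{I}$, not $\mathcal{I}_n$), where it genuinely is invertible since it acts as the identity on $\mathbb{T}_n^{\perp}$ and as $\mathcal{F}_n(\kappa_+)$ on $\mathbb{T}_n$. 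You instead prove a uniform discrete stability bound $\norm{\mathcal{F}_n(\kappa_+)x_n}_1\geq c\norm{x_n}_1$ by the classical compactness--contradiction argument, using only the compactness of $\mathcal{D}(\kappa_+)$ on $H^1(I)$ and the pointwise convergence $\mathcal{P}_n\to\mathcal{I}$ (which is uniform on precompact sets); injectivity on the finite-dimensional $\mathbb{T}_n$ then gives invertibility of the compression, which by the block-diagonal structure just noted is equivalent to the full-space statement of the lemma. Both proofs rest on the same essential ingredient --- the norm convergence $\norm{[\mathcal{D}_n(\kappa_+)-\mathcal{D}(\kappa_+)]\mathcal{D}_n(\kappa_+)}\to 0$ in the paper also requires compactness of $\mathcal{D}(\kappa_+)$ --- but the paper's version is more quantitative (it yields an explicit bound on the inverse in terms of $\norm{\mathcal{F}^{-1}(\kappa_+)}$ and $\norm{\mathcal{W}_n}$), while yours is softer and does not need the auxiliary operator $\mathcal{J}_n$. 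Your detour through conditions (A1)--(A4) and Theorem~2.9 of \cite{GongSun2023ETNA} is not actually needed once you run the subsequence argument directly, and your parenthetical caveat about the compression versus the full identity is exactly the right bookkeeping point; spelling out the block-diagonal decomposition $(\mathcal{I}+\mathcal{D}_n(\kappa_+))\phi=\phi_n^{\perp}+\mathcal{F}_n(\kappa_+)\phi_n$ would close that gap cleanly.
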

	\begin{proof}
		According to (iii) of Theorem \ref{thm:Sec2}, we have that $\mathcal{F}^{-1}(\kappa_+)$ is bounded, and 
		\begin{equation*}
		\mathcal{F}^{-1}(\kappa_+)=\mathcal{I}-\mathcal{F}^{-1}(\kappa_+)\mathcal{D}(\kappa_+). 
		\end{equation*}
	 Let $\mathcal{J}_n(\kappa_+):=\mathcal{I}-\mathcal{F}^{-1}(\kappa_+)\mathcal{D}_n(\kappa_+)$.
    Simple calculation yields that 
	 \begin{equation}
	 	\mathcal{J}_n(\kappa_+)(\mathcal{I}+\mathcal{D}_n(\kappa_+))=\mathcal{I}-\mathcal{W}_n,
	 \end{equation}
 where
 \begin{equation*}
 	\mathcal{W}_n(\kappa)=\mathcal{F}^{-1}(\kappa_+)[\mathcal{D}_n(\kappa_+)-\mathcal{D}(\kappa_+)]\mathcal{D}_n(\kappa_+).
 \end{equation*}
Since $\norm{[\mathcal{D}_n(\kappa_+)-\mathcal{D}(\kappa_+)]\mathcal{D}_n(\kappa_+)}\to 0$ as $n\to \infty$. There exists an integer $n_0\in\mathbb{N}$ such that, for all $n\in\mathbb{N}$ with $n>n_0$, $\norm{\mathcal{W}_n}<1$. This yields that
$\mathcal{I}+\mathcal{D}_n(\kappa_+)$ is invertible for $n > n_0$.
	\end{proof}
	
	If $\mathcal{I}+\mathcal{D}_n(\kappa_+)$ is invertible, $\mathcal{F}_n(\kappa_+)$ is invertible. Note that
	\begin{equation}\label{eigsRelationship}
		\mathcal{I}_n+\mathcal{F}_n^{-1}(\kappa_+)\mathcal{P}_n[\mathcal{D}(\kappa)-\mathcal{D}(\kappa_+)]\mathcal{P}_n=\mathcal{F}_n^{-1}(\kappa)[\mathcal{P}_n(\mathcal{I}+\mathcal{D}(\kappa))\mathcal{P}_n].
	\end{equation}
	The left hand side of the above equation is $\mathcal{H}_{n}(\kappa)$. Thus for $n$ large enough, $\mathcal{H}_{n}(\kappa)$ and $[\mathcal{P}_n(\mathcal{I}+\mathcal{D}(\kappa))\mathcal{P}_n]$ have the same eigenvalues.

	\section{Numerical Implementation and Examples}
	In this section we present the detail of the Fourier Galerkin method and several examples. 
	We first discretize the integral operators $\mathcal{S}(\kappa)$ and $\mathcal{D}(\kappa)$ defined by \eqref{Sec2:singleLayer_O} and \eqref{Sec2:doubleLayer_O}, respectively. The trigonometric projection is as follows. For given $n_1$,$n_2\in \mathbb{N}$, we denote by $\mathbb{T}_{n_1}\bigotimes\mathbb{T}_{n_2}$ the tensor product space of $2\pi$-biperiodic trigonometric functions. Choose an equidistant set of knots 
	\[
	s_j :=2\pi j/(2n_1+1), \quad t_k:=2\pi k/(2n_2+1),\quad n_1, n_2\in\mathbb{N}, j\in\mathbb{Z}_{2n_1}, k\in\mathbb{Z}_{2n_2}.
	\] 
	The trigonometric interpolation projection
	$\mathcal{Q}_{n_1,n_2}: [C(I)]^2\mapsto\mathbb{T}_{n_1}\bigotimes\mathbb{T}_{n_2}$ is defined as
	\begin{equation*}
		(\mathcal{Q}_{n_1,n_2}[f])(s_j,t_k)=f(s_j,t_k),~\text{for}~j\in\mathbb{Z}_{2n_1}, k\in\mathbb{Z}_{2n_2}.
	\end{equation*}
	It holds that 
	\begin{equation*}\label{sec4:InterpolationPro}
		(\mathcal{Q}_{n_1,n_2}[f])(s,t)=\sum_{\abs{j}\in\mathbb{Z}_{n_1}}\sum_{\abs{k}\in\mathbb{Z}_{n_2}}c_{j,k}e_j(s)e_k(t),
	\end{equation*} 
	where $e_j$ is defined by \eqref{Sec3:TriangularBasisFunctions}  and
	\begin{equation*}
		c_{j,k}=\dfrac{4\pi^2}{(2n_1+1)(2n_2+1)}\sum_{\ell\in\mathbb{Z}_{2n_1}}
		\sum_{m\in\mathbb{Z}_{2n_2}}f(s_\ell,t_m)\overline{e_{j}(s_\ell)}\overline{e_{k}(t_m)}.
	\end{equation*}

	We then split the operators  $\mathcal{S}(\kappa)$ and $\mathcal{D}(\kappa)$ into two types of integral operators. The first type is defined as
	\begin{equation*}\label{Aphi}
		(\mathcal{A}(a)[\phi])(s)=\int_{I}a(s,t)\ln{\left(4\sin^2{\dfrac{t-s}{2}}\right)}\phi(t){\rm d}t,~s\in I,
	\end{equation*}
	with a smooth kernel $a$. The second type is given by
	\begin{equation*}\label{Bphi}
		(\mathcal{B}(b)[\phi])(s)=\int_{I}b(s,t)\phi(t){\rm d}t,~s\in I,
	\end{equation*}
	with a smooth kernel $b$. Let $J_n$ denote the Bessel function of the first kind of order $n$ and $C_E$ denote Euler's constant. The parameterized operator $\mathcal{S}(\kappa)$ is given by
	\begin{equation*}
		(\mathcal{S}(\kappa)[\phi])(s)=(\mathcal{A}(a_S)[\phi])(s)+(\mathcal{B}(b_S)[\phi])(s), \quad  s\in I,
	\end{equation*}
	where
	\begin{align*}
		a_S(s, t)= & -\dfrac{1}{2\pi}J_0(\kappa|z(s)-z(t)|)|z'(t)|,\\
		b_S(s, t)= &\dfrac{\rm i}{2}H_0^{(1)}(\kappa|z(s)-z(t)|)|z'(t)|-a_S(s, t)\ln\left(4\sin^2\dfrac{s-t}{2}\right).
	\end{align*}
	The diagonal terms are 
	\begin{align*}
		a_S(t, t)= &\, -\dfrac{1}{2\pi}|z'(t)|, \\
		b_S(t, t)= &\,\left(\dfrac{\rm i}{2}-\dfrac{C_E}{\pi}-\dfrac{1}{2\pi}\ln{\left(\dfrac{\kappa^2}{4}|z'(t)|^2\right)} \right)|z'(t)|.
	\end{align*}
	The parameterized operator $\mathcal{D}(\kappa)$ has the form
	\begin{equation*}
		(\mathcal{D}(\kappa)[\phi])(s)=(\mathcal{A}(a_D)[\phi])(s)+(\mathcal{B}(b_D)[\phi])(s), \quad s\in I,
	\end{equation*}
	where 
	\begin{align*}
		a_D(s, t) =&- \dfrac{\kappa}{2\pi}\dfrac{(z(s)-z(t))\cdot\nu(z(t))}{|z(s)-z(t)|}J_1(\kappa|z(s)-z(t)|)|z'(t)|,\\
		b_D(s, t)  =&\dfrac{\mathrm{i}\kappa}{2}\dfrac{(z(s)-z(t))\cdot\nu(z(t))}{|z(s)-z(t)|}H_1^{(1)}(\kappa|z(s)-z(t)|)|z'(t)|\\
		&-a_D(s, t)\ln\left(4\sin^2\dfrac{s-t}{2}\right).
	\end{align*}
	The diagonal term is
	\begin{equation*}
		a_D(t, t)=0,~
		b_D(t, t)=\dfrac{1}{2\pi}\dfrac{z''(t)\cdot\nu(z(t))}{|z'(t)|}.
	\end{equation*}
	
	We finally  use the matrices
	\begin{equation*}
		\vec{S}_n(\kappa):=\mathcal{P}_n\mathcal{A}(\mathcal{Q}_{n,n}a_S)\mathcal{P}_n+\mathcal{P}_n\mathcal{B}(\mathcal{Q}_{n,n}b_S)\mathcal{P}_n
	\end{equation*}
	and
	\begin{equation*}
		\vec{D}_n(\kappa):=\mathcal{P}_n\mathcal{A}(\mathcal{Q}_{n,n}a_D)\mathcal{P}_n+\mathcal{P}_n\mathcal{B}(\mathcal{Q}_{n,n}b_D)\mathcal{P}_n
	\end{equation*}
	for $n\in\mathbb{N}$ to approximate the operators $\mathcal{S}(\kappa)$ and $\mathcal{D}(\kappa)$, respectively. The computation for the elements of those matrices can be found in \cite{MaSun2023JSC}.
	
	
	To compute the eigenvalues of the nonlinear matrices $\vec{S}_n(\kappa)$ or $\vec{I}_{2n+1}+\vec{D}_n(\kappa)$ for $n\in\mathbb{N}$,  we employ the parallel spectral indicator method (SIM) \cite{XiSun2024arXiv} (see also \cite{Beyn2012LAA, Beyn2014, HuangEtal2016JCP, Huang2018}). SIM uses contour integrals and is easy to implement. Its idea can be explained as follows. Let $\gamma:=\{z=z_0+r{\rm e}^{{\rm i }\theta}:~\theta\in[0,2\pi]\}$ be a circle centered at $z_0$ with radius $r$. For fixed $m\in\mathbb{N}$, and a random $\vec{f}\in\mathbb{C}^{2n+1}$, we approximate the spectral projection  by 
	\begin{equation*}
		\mathcal{R}_m \vec{f}:=\dfrac{1}{2m}\sum_{j=0}^{2m-1}r{\rm e}^{{\rm i }\theta_j}\vec{x}_j,
	\end{equation*}
	where $\theta_j:=\pi j/m$ for $j=0, 1, \ldots, 2m-1$, and $\vec{x}_j$ are the solutions of the following linear system
	\begin{equation*}
		(r{\rm e}^{{\rm i }\theta_j}\vec{I}_{2n+1}-\vec{W}_{ n}(\kappa))\vec{x}_j=\vec{f}
	\end{equation*} 
	with $\vec{W}_{ n}(\kappa)=\vec{S}_n(\kappa)$ or $\vec{W}_{ n}(\kappa)=\vec{I}_{2n+1}+\vec{D}_n(\kappa)$. If there are no eigenvalues inside $\gamma$,  then we  have $\mathcal{R}_m\vec{f} \approx 0$. One defines an indicator function
	\begin{equation*}
		{\rm  RIM}_m(\kappa):=\left\|\mathcal{R}_m\left(\dfrac{\mathcal{R}_m\vec{f}}{\|\mathcal{R}_m\vec{f}\|}\right)\right\|
	\end{equation*}
	and uses it decide whether there is an eigenvalue inside $\gamma$ or not. The parallel SIM can effectively compute all the eigenvalues of $\vec{S}_n(\kappa)$ (or $\vec{I}_{2n+1}+\vec{D}_n(\kappa)$) in $\Theta \subset \mathbb C$. We refer the readers to \cite{XiSun2024arXiv} for details of implementation and codes. 
	
	We present three examples to verify the theory in Section 3 by computing the scattering poles in  $\Theta:=\set{x+{\rm i}y: x\in(0,4), y\in(-4,0)}$. The eigenvalues are approximated by {\bf pmCIMb} in \cite{XiSun2024arXiv}.
	
	\begin{example}\label{exa:disk}
		Let $\Omega$ be the unit disk. In this case, the scattering poles are the zeros of $H_\nu^{(1)}(\kappa), \nu \ge 2$, where  $H_\nu^{(1)}$ is the Hankel function of the first kind of order $\nu$ \cite{MaSun2023AML}. We plot the values of  $\log{\rm  RIM}_m(\kappa)$ for $\vec{S}_n(\kappa)$ and $\vec{I}_{2n+1}+\vec{D}_n(\kappa)$ with $n=32$ in $\Theta$ in Fig \ref{fig:disk1}, where the locations of the poles can be clearly seen. The result is consistent with that in \cite{MaSun2023AML}. The value of ${\rm  RIM}_m(\kappa)$ is close to zero except for the location near the scattering poles. We first compute the roots of $H_\nu^{(1)}$ to find three poles $\kappa_1=1.3080120323 - 1.6817888047i$, $\kappa_2=3.1130829450 -2.2186262746i$ and $\kappa_3=1.3038823977 -3.1351328447i$ with the accuracy of $10^{-10}$. The computed poles for different $n$'s are shown in Tables \ref{Disc1}-\ref{Disc3}. It can be seen that the computed poles converges quickly and achieves very highly accuracy with a relative small $n$. The values computed using  $\vec{S}_n(\kappa)$ and $\vec{I}_{2n+1}+\vec{D}_n(\kappa)$ agree with each other.
		
        Let $AE_S(n)$ and $AE_D(n)$ denote the absolute errors for  $\vec{S}_n(\kappa)$ and $\vec{I}_{2n+1}+\vec{D}_n(\kappa)$  with $n=5$, $6$, $7$, $8$, $9$ and $10$. We show $AE_S(n)$ and $AE_D(n)$ for the eigenvalues $\kappa_1$ and $\kappa_2$ in Fig \ref{fig:disk2}, which indicates that the convergence is exponential. 
	\end{example}

	\vspace{-0.5em}
	\begin{figure}[h!]
		\begin{center}
			\begin{tabular}{cc}
				\resizebox{0.5\textwidth}{!}{\includegraphics{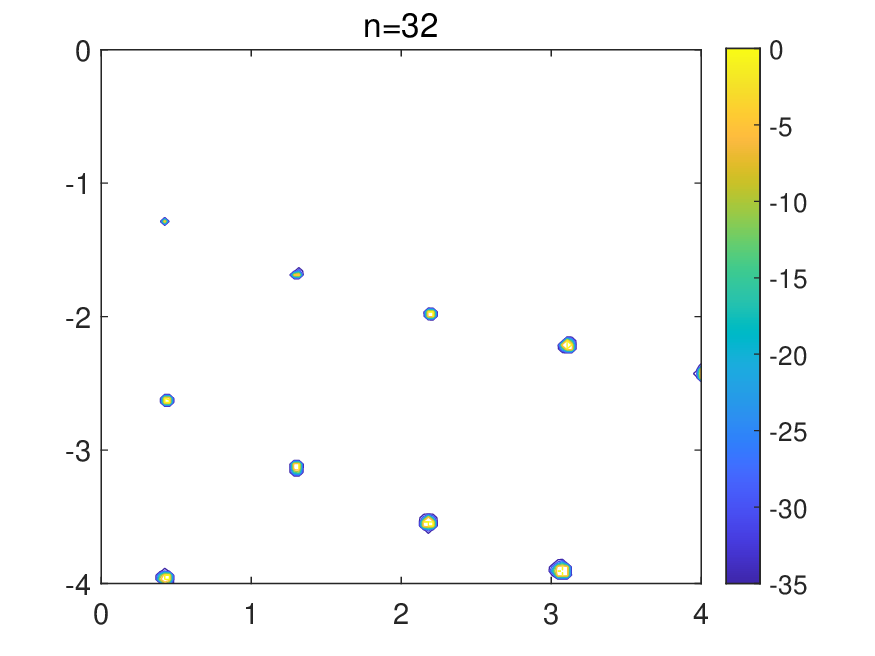}}&
				\resizebox{0.5\textwidth}{!}{\includegraphics{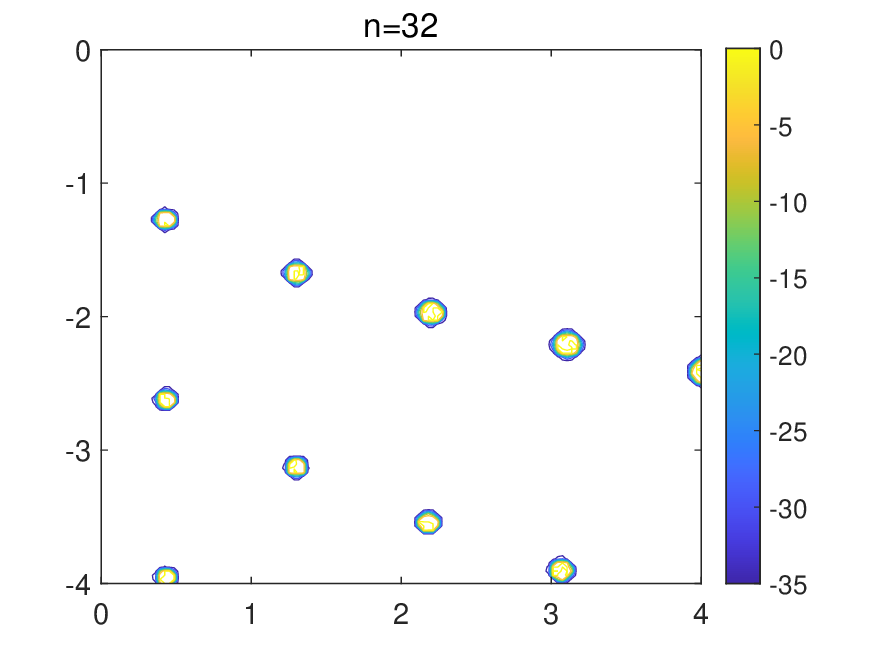}}
			\end{tabular}
		\end{center}
		\vspace{-1.5em}
		\caption{$\log{{\rm  RIM}_m(\kappa)}$ for  Example \ref{exa:disk} : Left: $\mathcal{S}_n(\kappa)$. Right: $\vec{I}_{2n+1}+\vec{D}_n(\kappa)$}
		\label{fig:disk1}
	\end{figure}

	\begin{center}
		\begin{table}[ht]
			\caption{Computed pole as the zero of $\vec{W}_n(\kappa)$ for Example \ref{exa:disk}. }
			\label{Disc1}
			\begin{center}
				\small
				\begin{tabular}{c|c|c}
					\hline
					n	&   $\vec{S}_n(\kappa)$ &  $\vec{I}_{2n+1}+\vec{D}_n(\kappa)$\\
					\hline
					$8$ & $1.308012032273757-1.681788804744781i$& $1.308012032273854 - 1.681788804742794i$\\
					\hline
					$16$ & $1.308012032273949 - 1.681788804745845i$& $1.308012032273949 - 1.681788804745844i$ \\
					\hline
					$32$ & $1.308012032273949 - 1.681788804745846i$&$1.308012032273949 - 1.681788804745845i$ \\
					\hline
					$64$ & $1.308012032273948 - 1.681788804745846i$& $1.308012032273948 - 1.681788804745844i$\\
					\hline
				\end{tabular}
			\end{center}
		\end{table}
	\end{center}
	
	\begin{center}
		\begin{table}[h!]
			\caption{Computed pole as the zero of $\vec{W}_n(\kappa)$ for Example \ref{exa:disk}. }
			\label{Disc2}
			\begin{center}
				\small
				\begin{tabular}{c|c|c}
					\hline
					n	&   $\vec{S}_n(\kappa)$ &  $\vec{I}_{2n+1}+\vec{D}_n(\kappa)$\\
					\hline
					$8$ & $3.113082969542856 - 2.218626154286283i$& $3.113083029499494 - 2.218626067886159i$\\
					\hline
					$16$ & $3.113082944985948 - 2.218626274639880i$& $ 3.113082944985947 - 2.218626274639878i$ \\
					\hline
					$32$ & $3.113082944985948 - 2.218626274639877i$&$3.113082944985949 - 2.218626274639875i$ \\
					\hline
					$64$ & $3.113082944985951 - 2.218626274639876i$& $3.113082944985946 - 2.218626274639875i$\\
					\hline
				\end{tabular}
			\end{center}
		\end{table}
	\end{center}
	
	\begin{center}
		\begin{table}[h!]
			\caption{Computed pole as the zero of $\vec{W}_n(\kappa)$ for Example \ref{exa:disk}. }
			\label{Disc3}
			\begin{center}
				\small
				\begin{tabular}{c|c|c}
					\hline
					n	&   $\vec{S}_n(\kappa)$ &  $\vec{I}_{2n+1}+\vec{D}_n(\kappa)$\\
					\hline
					$8$ & $1.303882375745608 - 3.135132844043817i$& $1.303882361925792 - 3.135132840998595i$\\
					\hline
					$16$ & $1.303882397713727 - 3.135132844704634i$& $1.303882397713714 - 3.135132844704639i$ \\
					\hline
					$32$ & $1.303882397713703 - 3.135132844704641i$&$1.303882397713705 - 3.135132844704644i$ \\
					\hline
					$64$ & $1.303882397713699 - 3.135132844704636i$& $1.303882397713704 - 3.135132844704642i$\\
					\hline
				\end{tabular}
			\end{center}
		\end{table}
	\end{center}
	
			\vspace{-0.5em}
	\begin{figure}[h!]
		\begin{center}
			\begin{tabular}{cc}
				\resizebox{0.5\textwidth}{!}{\includegraphics{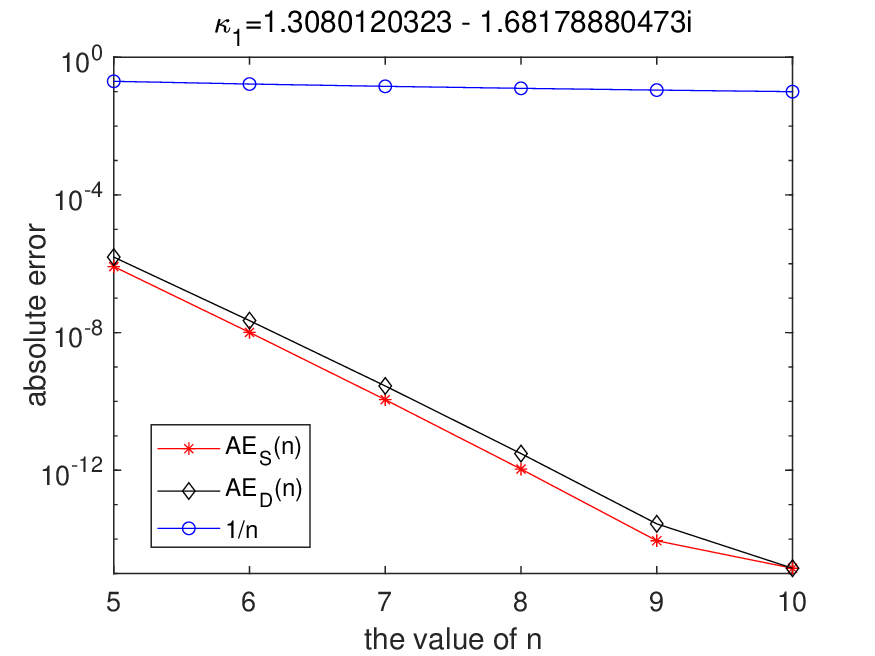}}&
				\resizebox{0.5\textwidth}{!}{\includegraphics{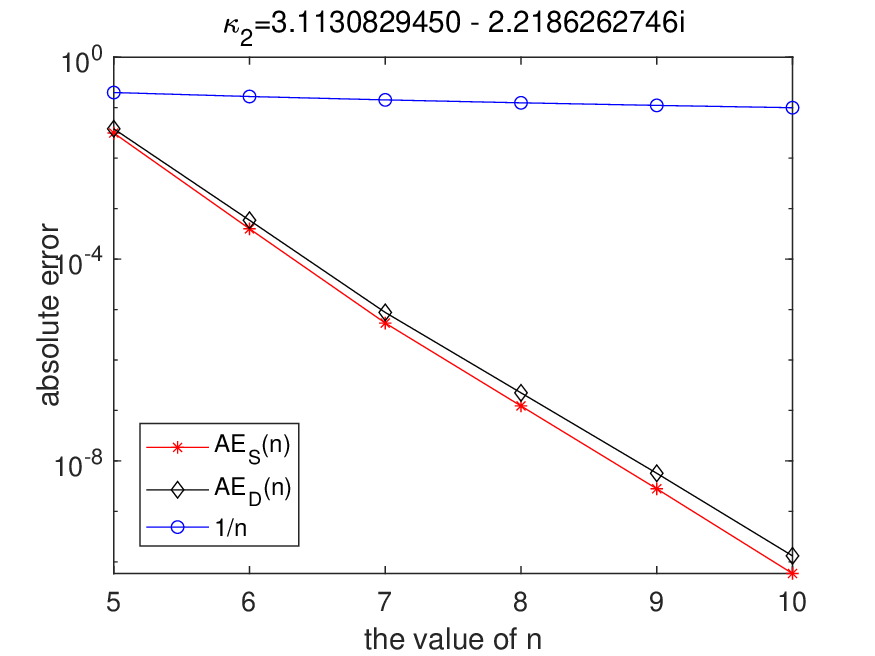}}
			\end{tabular}
		\end{center}
		\vspace{-1.5em}
		\caption{absolute error for  Example \ref{exa:disk} : Left: $\kappa_1$. Right: $\kappa_2$}
		\label{fig:disk2}
	\end{figure}

	\begin{example}\label{exa:peanut}
		We consider a peanut-shaped domain $\Omega$ whose boundary is given by 
		$$
		\sqrt{0.25+\cos^2 t}\,(\cos t, \sin t), \quad t\in[0, 2\pi). 
		$$
		We plot $\log {\rm  RIM}_m(\kappa)$ for  $\vec{S}_n(\kappa)$ and $\vec{I}_{2n+1}+\vec{D}_n(\kappa)$ with $n=32$ in Fig \ref{fig:peanut}. Computed zeros of $\vec{S}_n(\kappa)$ and   $\vec{I}_{2n+1}+\vec{D}_n(\kappa)$ for different $n$ are presented in Tables \ref{Peanut1}- \ref{Peanut2}. High accuracy is achieved with a relative small $n$. The values computed using  $\vec{S}_n(\kappa)$ and $\vec{I}_{2n+1}+\vec{D}_n(\kappa)$ agree with each other.
	\end{example}

	\begin{figure}[ht]
		\begin{center}
			\begin{tabular}{cc}
				\resizebox{0.5\textwidth}{!}{\includegraphics{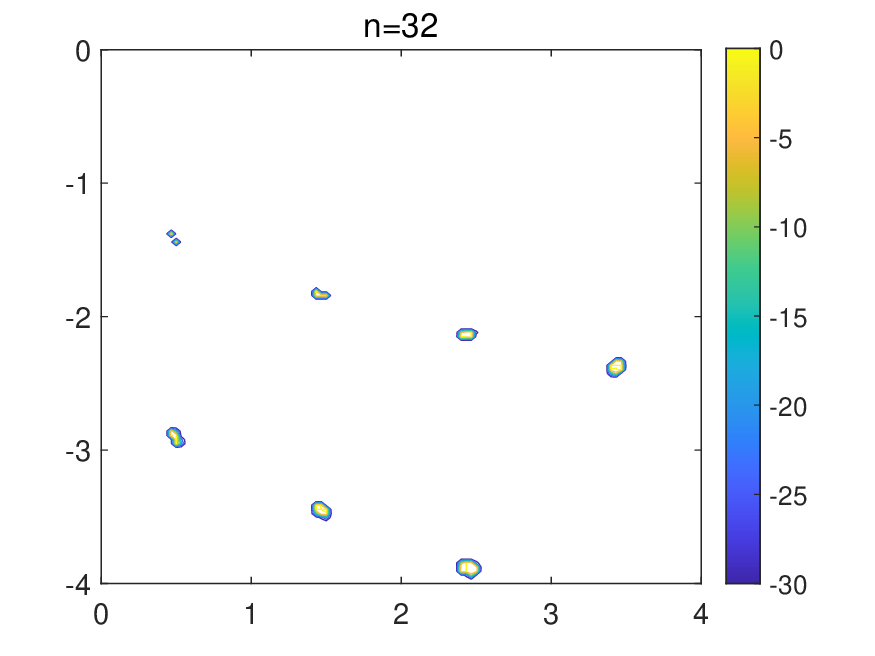}}&
				\resizebox{0.5\textwidth}{!}{\includegraphics{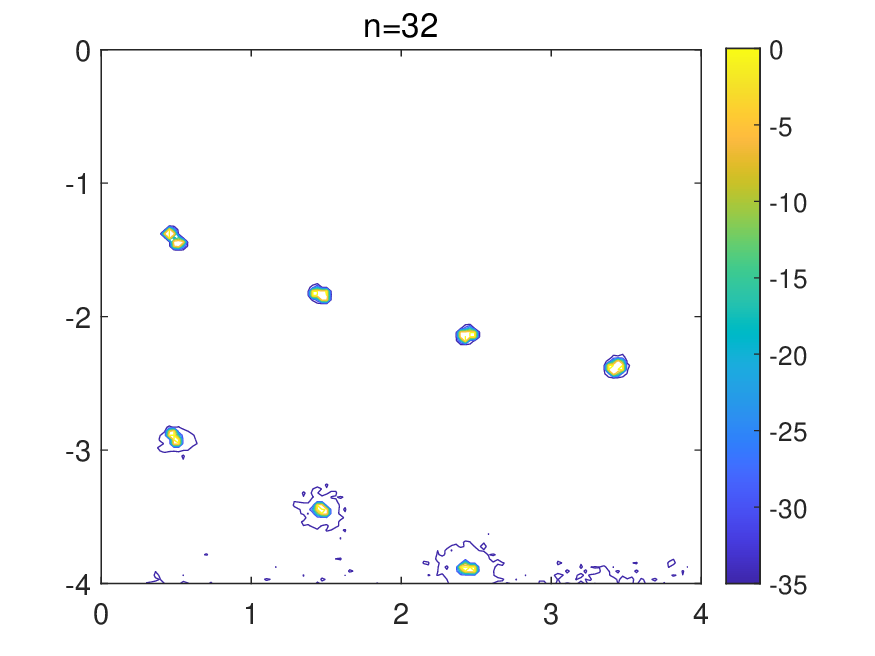}}
			\end{tabular}
		\end{center}
		\vspace{-1em}
		\caption{$\log{{\rm  RIM}_m(\kappa)}$ for  Example \ref{exa:peanut} : Left: $\mathcal{S}_n(\kappa)$. Right: $\vec{I}_{2n+1}+\vec{D}_n(\kappa)$}
		\label{fig:peanut}
	\end{figure}
	
	\begin{center}
		\begin{table}[h!]
			\caption{Computed pole as the zero of $\vec{W}_n(\kappa)$ for Example \ref{exa:peanut}. }
			\label{Peanut1}
			\begin{center}
				\small
				\begin{tabular}{c|c|c}
					\hline
					n	&   $\vec{S}_n(\kappa)$ &  $\vec{I}_{2n+1}+\vec{D}_n(\kappa)$\\
					\hline
					$8$ & $0.512610325138307 - 1.450154397792730i$& $0.512923981466455 - 1.450228641456581i$\\
					\hline
					$16$ & $0.513058892023026 - 1.450268225906149i$& $0.513059001432791 - 1.450268317679559i$ \\
					\hline
					$32$ & $0.513059002353327 - 1.450268319362658i$&$0.513059002368638 - 1.450268319377324i$ \\
					\hline
					$64$ & $0.513059002368638 - 1.450268319377325i$& $0.513059002368639 - 1.450268319377327i$\\
					\hline
				\end{tabular}
			\end{center}
		\end{table}
	\end{center}
	
	\begin{center}
		\begin{table}[h!]
			\caption{Computed pole as the zero of $\vec{W}_n(\kappa)$ for Example \ref{exa:peanut}. }
			\label{Peanut2}
			\begin{center}
				\small
				\begin{tabular}{c|c|c}
					\hline
					n	&   $\vec{S}_n(\kappa)$ &  $\vec{I}_{2n+1}+\vec{D}_n(\kappa)$\\
					\hline
					$8$ & $1.514816785260778 - 3.448351311881411i$& $1.479461533298764 - 3.435647225729820i$\\
					\hline
					$16$ & $1.450115980679582 - 3.441401566319377i$& $1.450573654621246 - 3.441041904050631i$ \\
					\hline
					$32$ & $1.450590990544579 - 3.441027020839657i$&$1.450590990128027 - 3.441027019823299i$ \\
					\hline
					$64$ & $1.450590990127992 - 3.441027019823282i$& $1.450590990128039 - 3.441027019823282i$\\
					\hline
				\end{tabular}
			\end{center}
		\end{table}
	\end{center}

	\begin{example}\label{exa:acorn}
		We consider an acorn-shaped domain $\Omega$ whose boundary is given by 
		$$
		0.6\sqrt{17/4+2\cos 3t}\,(\cos t, \sin t), \quad t\in[0, 2\pi). 
		$$
		In Fig \ref{fig:acorn}, we plot $\log{\rm  RIM}_m(\kappa)$ for  $\vec{S}_n(\kappa)$ and $\vec{I}_{2n+1}+\vec{D}_n(\kappa)$ with $n=32$ in $\Theta$. The computed zeros of $\vec{S}_n(\kappa)$ and   $\vec{I}_{2n+1}+\vec{D}_n(\kappa)$ for different $n$ are shown in Tables \ref{Acorn1}- \ref{Acorn2}. Similar to the previous two examples, high accuracy is achieved with a relative small $n$. Again, the values computed using  $\vec{S}_n(\kappa)$ and $\vec{I}_{2n+1}+\vec{D}_n(\kappa)$ agree with each other.
	\end{example}
	
	\begin{figure}[h!]
		\begin{center}
			\begin{tabular}{cc}
				\resizebox{0.5\textwidth}{!}{\includegraphics{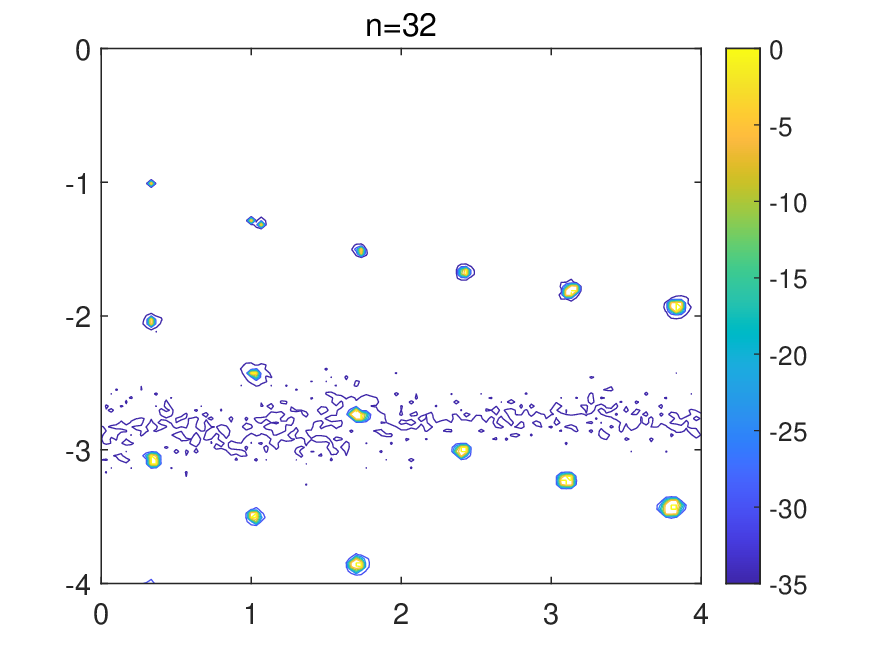}}&
				\resizebox{0.5\textwidth}{!}{\includegraphics{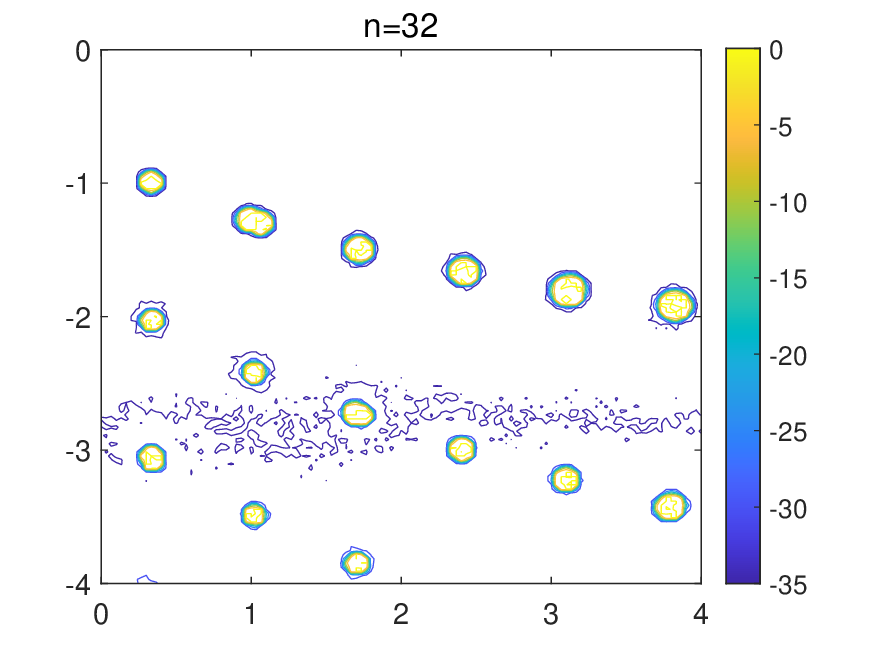}}
			\end{tabular}
		\end{center}
		\vspace{-1em}
		\caption{$\log{{\rm  RIM}_m(\kappa)}$ for  Example \ref{exa:acorn} : Left: $\mathcal{S}_n(\kappa)$. Right: $\vec{I}_{2n+1}+\vec{D}_n(\kappa)$}
		\label{fig:acorn}
	\end{figure}

	\begin{center}
		\begin{table}[h!]
			\caption{Computed pole as the zero of $\vec{W}_n(\kappa)$ for Example \ref{exa:acorn}. }
			\label{Acorn1}
			\begin{center}
				\small
				\begin{tabular}{c|c|c}
					\hline
					n	&   $\vec{S}_n(\kappa)$ &  $\vec{I}_{2n+1}+\vec{D}_n(\kappa)$\\
					\hline
					$8$ & $ 1.058802098401044 - 1.310170026426000i$& $1.064328363767797 - 1.309127243755672i$\\
					\hline
					$16$ & $1.064342851996245 - 1.309659237771693i$& $1.064343953354416 - 1.309657272685452i$ \\
					\hline
					$32$ & $1.064344075109297 - 1.309657355003215i$&$1.064344075189831 - 1.309657354813591i$ \\
					\hline
					$64$ & $1.064344075189833 - 1.309657354813590i$& $1.064344075189830 - 1.309657354813590i$\\
					\hline
				\end{tabular}
			\end{center}
		\end{table}
	\end{center}
	
	\begin{center}
		\begin{table}[h!]
			\caption{Computed pole as the zero of $\vec{W}_n(\kappa)$ for Example \ref{exa:acorn}. }
			\label{Acorn2}
			\begin{center}
				\small
				\begin{tabular}{c|c|c}
					\hline
					n	&   $\vec{S}_n(\kappa)$ &  $\vec{I}_{2n+1}+\vec{D}_n(\kappa)$\\
					\hline
					$8$ & $2.321023198349530 - 3.053291555439695i$& $2.302394469099680 - 3.179996502269232i$\\
					\hline
					$16$ & $2.409754984468690 - 3.007894134552222i$& $2.409737797779822 - 3.007661776632856i$ \\
					\hline
					$32$ & $2.409823640903252 - 3.007788190190519i$&$2.409822431695346 - 3.007787123777256i$ \\
					\hline
					$64$ & $2.409822431724737 - 3.007787123094735i$& $2.409822431724733 - 3.007787123094671i$\\
					\hline
				\end{tabular}
			\end{center}
		\end{table}
	\end{center}
	
	
	\section{Conclusions and Discussions}
	In this paper, we consider the computation of scattering poles for a sound-soft obstacle. These poles correspond to the eigenvalues of certain boundary integral operators. We show that they are Fredholm operators of index zero. Then we propose a Fourier-Galerkin method to discretize the integral operators and prove the convergence. We discuss the implementation of the method in detail. Numerical examples are provided to validate the theory and demonstrate the effectiveness of the proposed method.
	
	The use of boundary integral operators for computing scattering poles requires discretization only on the boundary of the obstacle, leading to a smaller algebraic system than other methods such as the finite element method. Additionally, the outgoing condition is naturally satisfied, and no spurious modes are introduced. When combined with the parallel spectral indicator method, the proposed approach offers an efficient and effective tool for computing scattering poles.
	
	In the future, we plan to extend the proposed method to compute scattering poles for inhomogeneous media. We are also interested in exploring scattering poles (resonances) for other wave phenomena, such as Maxwell's equations, Schr\"{o}dinger's equation, and periodic structures. In this paper, the boundary of the obstacle is assumed to be rather smooth. Effective methods for Lipschitz domains should be investigated.
	

	%
	
\end{document}